\newtheorem{theorem}{Theorem}[section]
\newtheorem{proposition}[theorem]{Proposition}
\newtheorem{lemma}[theorem]{Lemma}
\newtheorem{corollary}[theorem]{Corollary}
\numberwithin{equation}{section}
\DeclareMathOperator{\Aut}{Aut}
\DeclareMathOperator{\con}{con}
\DeclareMathOperator{\End}{End}
\DeclareMathOperator{\Gr}{Gr}
\DeclareMathOperator{\Perm}{Perm}
\DeclareMathOperator{\Stab}{Stab}
\DeclareMathOperator{\Sub}{Sub}
\DeclareMathOperator{\var}{var}
\renewcommand*\subjclass[2][2010]{\def\@subjclass{#2}\@ifundefined{subjclassname@#1}{\ClassWarning{\@classname}{Unknown edition (#1) of Mathematics Subject Classification; using '2010'.}}{\@xp\let\@xp\subjclassname\csname subjclassname@#1\endcsname}}
\begin{document}

\title[Cancellable elements of the lattices of varieties]{ Cancellable elements of the lattices of varieties\\
of semigroups and epigroups}
\thanks{All authors are supported by the Ministry of Education and Science of the Russian Federation (project 1.6018.2017/8.9) and by Russian Foundation for Basic Research (the first and the second authors by grant 18-31-00443; the third author by grant 17-01-00551).}

\author[V. Yu. Shaprynski\v{\i}]{Vyacheslav Yu. Shaprynski\v{\i}}

\author[D. V. Skokov]{Dmitry V. Skokov}

\author[B. M. Vernikov]{Boris M. Vernikov}
\address{Institute of Natural Sciences and Mathematics, Ural Federal University, Lenina str. 51, 620000 Ekaterinburg, Russia}
\email{vshapr@yandex.ru, dmitry.skokov@gmail.com, bvernikov@gmail.com}

\keywords{Semigroup, epigroup, variety, lattice of varieties, cancellable element of a lattice}

\subjclass{Primary 20M07, secondary 08B15}

\begin{abstract}
We completely determine all semigroup [epigroup] varieties that are cancellable elements of the lattice of all semigroup [respectively epigroup] varieties.
\end{abstract}

\maketitle

\section{Introduction and summary}
\label{intr}

The collection of all semigroup varieties forms a lattice under the following naturally defined operations: for varieties $\mathbf X$ and $\mathbf Y$, their \emph{join} $\mathbf{X\vee Y}$ is the variety generated by the set-theoretical union of $\mathbf X$ and $\mathbf Y$ (as classes of semigroups), while their \emph{meet} $\mathbf{X\wedge Y}$ coincides with the set-theoretical intersection of $\mathbf X$ and $\mathbf Y$. We denote the lattice of all semigroup varieties by $\mathbb{SEM}$. This lattice has been intensively studied during more than 50 years. Probably, the first result in this area is the description of atoms of the lattice $\mathbb{SEM}$ obtained by Kalicki and Scott in 1955~\cite{Kalicki-Scott-55}. Results obtained on the first stages of these investigations are observed in the surveys~\cite{Aizenstat-Boguta-79} and~\cite{Evans-71}. The later survey~\cite{Shevrin-Vernikov-Volkov-09} observes the situation in the area we discuss, which is close to the contemporary one. Note that the structure of the lattice $\mathbb{SEM}$ is very complex. This is confirmed, in particular, by the fact that this lattice contains an anti-isomorphic copy of the partition lattice over a countably infinite set~\cite{Burris-Nelson-71,Jezek-76}, whence $\mathbb{SEM}$ does not satisfy any non-trivial lattice identity.

In addition to the lattice $\mathbb{SEM}$, in this article we examine one more varietal lattice related to $\mathbb{SEM}$, namely, the lattice of all epigroup varieties. We recall the corresponding definitions. An element $x$ of a semigroup $S$ is called a \emph{group element} if $x$ lies in some subgroup of $S$. A semigroup $S$ is called an \emph{epigroup} if for any $x\in S$ there is a natural $n$ such that $x^n$ is a group element. Extensive information about epigroups can be found in the fundamental work by Shevrin~\cite{Shevrin-94} or his survey~\cite{Shevrin-05}. The class of epigroups is very wide. In particular, it includes all periodic semigroups (because some power of each element in a periodic semigroup $S$ lies in some finite cyclic subgroup of $S$) and all \emph{completely regular} semigroups (unions of groups).

Epigroups (along with completely regular, inverse, involutary semigroups etc.) can be considered as so-called \emph{unary semigroups}, i.e., semigroups equipped by an additional unary operation. A unary operation on an epigroup can be defined in the following way. Let $S$ be an epigroup and $x\in S$. Then some power of $x$ lies in a maximal subgroup of $S$. We denote this subgroup by $G_x$. The identity element of $G_x$ is denoted by $x^\omega$. It is well known (see~\cite{Shevrin-94} or~\cite{Shevrin-05}, for instance) that the element $x^\omega$ is well defined and $xx^\omega=x^\omega x\in G_x$. We denote the element inverse to $xx^\omega$ in $G_x$ by $\overline x$\,. The map $x\mapsto\,\overline x$ is the just mentioned unary operation on an epigroup $S$. The element $\overline x$ is called \emph{pseudoinverse} of $x$. Throughout this paper, we consider epigroups as algebras of type $(2,1)$ with the operations of multiplication and pseudoinversion. This naturally leads to the concept of varieties of epigroups as algebras with these two operations. 

The collection of all epigroup varieties forms a lattice under the operations defined by the same way as in the lattice $\mathbb{SEM}$ (see the first phrase of the article). We denote this lattice by $\mathbb{EPI}$. Note that the class of all epigroups is not an epigroup variety (because it is not closed under taking of infinite direct products), so the lattice $\mathbb{EPI}$ does not have the largest element. The class of all varieties of completely regular semigroups considered as unary semigroups forms an important sublattice of $\mathbb{EPI}$. This sublattice was intensively studied from the 1970s to the 1990s (see~\cite{Petrich-Reilly-99} or~\cite[Section~6]{Shevrin-Vernikov-Volkov-09}). An examination of the lattice $\mathbb{EPI}$ was initiated in~\cite{Shevrin-94}. An overview of the first results obtained in this area can be found in~\cite[Section~2]{Shevrin-Vernikov-Volkov-09}.

It is well known and can be easily checked that in every periodic epigroup the operation of pseudoinversion can be expressed in terms of multiplication (see~\cite{Shevrin-94} or~\cite{Shevrin-05}, for instance). This means that periodic varieties of epigroups can be identified with periodic varieties of semigroups. Thus, the lattices $\mathbb{SEM}$ and $\mathbb{EPI}$ have a big common sublattice, namely, the lattice $\mathbb{PER}$ of all periodic semigroup varieties. Results of the mentioned above article~\cite{Jezek-76} immediately imply that even the lattice $\mathbb{PER}$ contains an anti-isomorphic copy of the partition lattice over a countably infinite set. This means, in particular, that the lattice $\mathbb{EPI}$ also contains the dual to this partition lattice as a sublattice. Therefore, $\mathbb{EPI}$, as well as $\mathbb{SEM}$, does not satisfy any non-trivial lattice identity.

The absence of non-trivial identities in the lattices $\mathbb{SEM}$ and $\mathbb{EPI}$ makes it natural to examine the elements of these lattices with properties that are somehow connected with lattice identities. We take in mind so-called \emph{special elements} of different types in lattices. In the theory of lattices, special elements of many types are investigated. We recall definitions of those types of such elements that appear below. An element $x$ of a lattice $\langle L;\vee,\wedge\rangle$ is called \emph{neutral} if
$$
(\forall y,z\in L)\quad(x\vee y)\wedge(y\vee z)\wedge(z\vee x)=(x\wedge y)\vee(y\wedge z)\vee(z\wedge x).
$$
It is well known that an element $x$ is neutral if and only if, for all $y,z\in L$, the sublattice of $L$ generated by $x$, $y$ and $z$ is distributive (see~\cite[Theorem~254]{Gratzer-11}). Further, an element $x\in L$ is called
\begin{align*}
&\text{\emph{distributive} if}&&(\forall y,z\in L)\quad x\vee(y\wedge z)=(x\vee y)\wedge(x\vee z),\\
&\text{\emph{standard} if}&&(\forall y,z\in L)\quad(x\vee y)\wedge z=(x\wedge z)\vee(y\wedge z),\\
&\text{\emph{modular} if}&&(\forall y,z\in L)\quad y\le z\longrightarrow(x\vee y)\wedge z=(x\wedge z)\vee y,\\
&\text{\emph{cancellable} if}&&(\forall y,z\in L)\quad x\vee y=x\vee z\ \&\ x\wedge y=x\wedge z\longrightarrow y=z.
\end{align*}
It is easy to see that any standard element is cancellable, while any cancellable element is modular. Special elements play an important role in the general lattice theory (see~\cite[Section~III.2]{Gratzer-11}, for instance).

There are many articles devoted to special elements of different types in the lattice $\mathbb{SEM}$. An overview of results in this subject published before 2015 can be found in the survey~\cite{Vernikov-15}. A number of results about special elements of different types in the lattice $\mathbb{EPI}$ were obtained in~\cite{Shaprynskii-Skokov-Vernikov-16,Skokov-15,Skokov-16}. We mention here only two results (one for semigroup and one for epigroup cases) closely related with this article. In~\cite[Theorem~3.1]{Vernikov-07}, commutative semigroup varieties that are modular elements of the lattice $\mathbb{SEM}$ are completely determined. An analogous result concerning epigroup varieties is proved in~\cite[Theorem~1.3]{Shaprynskii-Skokov-Vernikov-16}. 

Until recently nothing was known about cancellable elements in the lattices $\mathbb{SEM}$ and $\mathbb{EPI}$. However, now the situation has changed. It is proved in~\cite{Gusev-Skokov-Vernikov-18} that, for commutative semigroup varieties, the properties of being cancellable and modular elements in $\mathbb{SEM}$ are equivalent. An analogous result for the lattice $\mathbb{EPI}$ is verified in~\cite{Skokov-18}. These claims together with the results mentioned at the end of the previous paragraph give a complete description of commutative semigroup [epigroup] varieties that are cancellable elements of the lattice $\mathbb{SEM}$ [respectively $\mathbb{EPI}$]. For arbitrary semigroup [epigroup] varieties the properties of being cancellable and modular elements in the lattice $\mathbb{SEM}$ [respectively $\mathbb{EPI}$] are not equivalent. This is verified by the second and the third author for $\mathbb{SEM}$ in~\cite{Skokov-Vernikov-19} and by the second author for $\mathbb{EPI}$ (unpublished). In the present article we finish investigations started in~\cite{Gusev-Skokov-Vernikov-18,Skokov-18,Skokov-Vernikov-19} and completely determine all cancellable elements in $\mathbb{SEM}$ and $\mathbb{EPI}$. 

To formulate the main results of the article, we need some definitions and notation. Elements of the free unary semigroup are called \emph{words}. Words unlike \emph{letters} (elements of alphabet) are written in bold. A word that does not contain a unary operation is called a \emph{semigroup word}. It is natural to consider semigroup words as elements of the free semigroup. We connect two parts of an identity by the symbol~$\approx$, while the symbol~$=$ denotes, among other things, the equality relation on the free [unary] semigroup. Note that a semigroup $S$ satisfies the identity system $\mathbf wx\approx x\mathbf{w\approx w}$ where the letter $x$ does not occur in the word $\mathbf w$ if and only if $S$ contains a zero element~0 and all values of $\mathbf w$ in $S$ are equal to~0. We adopt the usual convention of writing $\mathbf w\approx 0$ as a short form of such a system and referring to the expression $\mathbf w\approx 0$ as to a single identity. Identities of the form $\mathbf w\approx 0$ and varieties given by such identities are called 0-\emph{reduced}. By $S_m$ we denote the full symmetric group on the set $\{1,2,\dots,m\}$. The identity
\begin{equation}
\label{permut id}
x_1x_2\cdots x_m\approx x_{1\pi}x_{2\pi}\cdots x_{m\pi}
\end{equation}
where $\pi\in S_m$ is denoted by $p_m[\pi]$. If the permutation $\pi$ is non-trivial then this identity is called \emph{permutational}. The number $m$ is called a \emph{length} of this identity. Let $\mathbf T$, $\mathbf{SL}$ and $\mathbf{SEM}$ be the trivial semigroup variety, the variety of all semilattices and the variety of all semigroups respectively. The semigroup variety given by the identity system $\Sigma$ is denoted by $\var\Sigma$. Put
\begin{align*}
\mathbf X_{\infty,\infty}={}&\var\{x^2y\approx xyx\approx yx^2\approx 0\},\\
\mathbf X_{m,\infty}={}&\mathbf X_{\infty,\infty}\wedge\var\{p_m[\pi]\mid\pi\in S_m\}\text{ where }2\le m<\infty,\\
\mathbf X_{m,n}={}&\mathbf X_{m,\infty}\wedge\var\{x_1x_2\cdots x_n\approx 0\}\text{ where }2\le m\le n<\infty,\\
\mathbf Y_{m,n}={}&\mathbf X_{m,n}\wedge\var\{x^2\approx 0\}\text{ where }2\le m\le n\le\infty.
\end{align*}
Note that the varieties $\mathbf T$, $\mathbf{SL}$, $\mathbf X_{m,n}$ and $\mathbf Y_{m,n}$ with $2\le m\le n\le\infty$ are periodic. Whence, they can be considered both as semigroup varieties and as epigroup ones. 

The first main result of the article is the following

\begin{theorem}
\label{main semigroup}
A semigroup variety $\mathbf V$ is a cancellable element of the lattice $\mathbb{SEM}$ if and only if either $\mathbf{V=SEM}$ or $\mathbf{V=M\vee N}$ where $\mathbf M$ is one of the varieties $\mathbf T$ or $\mathbf{SL}$, while $\mathbf N$ is one of the varieties $\mathbf T$, $\mathbf X_{m,n}$ or $\mathbf Y_{m,n}$ with $2\le m\le n\le\infty$.
\end{theorem}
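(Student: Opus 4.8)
The plan is to prove the two implications of the equivalence separately, using throughout the fact recorded in the introduction that every cancellable element is modular; this lets the known stock of modular elements bound from above the class we must isolate.

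\emph{Sufficiency.} Since $\mathbf{SEM}$ is the greatest element of $\mathbb{SEM}$, we have $\mathbf{SEM}\wedge\mathbf Y=\mathbf Y$ for every variety $\mathbf Y$, and cancellability of $\mathbf{SEM}$ is then immediate from the definition. For a variety $\mathbf V=\mathbf M\vee\mathbf N$ of the listed form I would verify the cancellation law directly: given $\mathbf Y$ and $\mathbf Z$ with $\mathbf V\vee\mathbf Y=\mathbf V\vee\mathbf Z$ and $\mathbf V\wedge\mathbf Y=\mathbf V\wedge\mathbf Z$, one must show that $\mathbf Y$ and $\mathbf Z$ satisfy exactly the same identities. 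The factor $\mathbf M$ is either $\mathbf T$ or the variety $\mathbf{SL}$, and the factor $\mathbf N$ is a nil-variety whose identity basis consists of the short zero-identities defining $\mathbf X_{\infty,\infty}$, a block of permutational identities $p_m[\pi]$, and a zero-identity of length $n$ (together with $x^2\approx0$ in the $\mathbf Y$-case). Exploiting this transparent basis, an arbitrary identity can be classified according to whether its two sides are linear, contain a repeated letter, or are long, and the join and meet hypotheses then determine the behaviour of $\mathbf Y$ and $\mathbf Z$ on each such class of words, forcing $\mathbf Y=\mathbf Z$.

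\emph{Necessity.} Assume $\mathbf V$ is cancellable and $\mathbf V\neq\mathbf{SEM}$; then $\mathbf V$ is modular. I would first show that $\mathbf V$ is periodic and that its completely regular members form at most the variety of semilattices; in particular $\mathbf V$ contains no nontrivial group and no nontrivial band other than a semilattice. For each $\mathbf V$ violating one of these restrictions one produces two distinct varieties having the same meet and the same join with $\mathbf V$, contradicting cancellability. These restrictions, together with modularity, yield a decomposition $\mathbf V=\mathbf M\vee\mathbf N$ with $\mathbf M\in\{\mathbf T,\mathbf{SL}\}$ and $\mathbf N$ a nil-variety. It then remains to force $\mathbf N$ to be trivial or to belong to the families $\mathbf X_{m,n}$ or $\mathbf Y_{m,n}$. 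This last step is the crux and is exactly where cancellable elements form a proper subclass of modular ones: for any nil-variety $\mathbf N$ lying outside these families I would exhibit an explicit pair $\mathbf Y\neq\mathbf Z$ of nil-varieties with $\mathbf V\vee\mathbf Y=\mathbf V\vee\mathbf Z$ and $\mathbf V\wedge\mathbf Y=\mathbf V\wedge\mathbf Z$. The parameter $m$ arises as the least length at which $\mathbf N$ satisfies all permutational identities, $n$ as its nilpotency degree, and the presence or absence of the identity $x^2\approx0$ separates the $\mathbf Y$-family from the $\mathbf X$-family; one finally checks that every admissible pair $2\le m\le n\le\infty$ does occur.

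\emph{Main obstacle.} The principal difficulty is the construction of the separating pairs $\mathbf Y,\mathbf Z$ in the necessity argument for every non-candidate nil-part: these counterexamples must be built from the precise identity basis of $\mathbf N$, and verifying that each such pair shares both its meet and its join with $\mathbf V$ requires delicate manipulation of the permutational identities $p_m[\pi]$ and of the zero-identities of the relevant lengths. A secondary difficulty lies in the sufficiency check, since the join $\mathbf M\vee\mathbf N$ does not interact simply with meets, so the reduction to the behaviour of $\mathbf Y$ and $\mathbf Z$ on short words must be justified with care rather than taken for granted.
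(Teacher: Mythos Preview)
Your outline has the right overall shape, but it is missing the two technical engines that make the necessity argument go through; without them the ``main obstacle'' you identify is not merely difficult but open-ended.

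For necessity, the paper does \emph{not} attempt to exhibit a separating pair $\mathbf Y,\mathbf Z$ for every offending nil-variety by hand. Instead it proves two uniform lemmas. First (Proposition~\ref{cancellable from V to Perm_n(V)}): if $\mathbf V$ is cancellable in $\mathbb{SEM}$ then, for every $n$, the subgroup $\Perm_n(\mathbf V)=\{\pi\in S_n\mid\mathbf V\models p_n[\pi]\}$ is a cancellable element of $\Sub(S_n)$. Since the only cancellable subgroups of $S_n$ are $T$ and $S_n$ itself (Lemma~\ref{subgroups cancellable}, which needs the explicit diagrams of $\Sub(S_3)$ and $\Sub(S_4)$ together with Je\v zek's classification of modular subgroups for $n\ge5$), this forces the all-or-nothing behaviour of permutational identities (Proposition~\ref{alternative for cancellable}). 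Your sketch tacitly assumes this when you let $m$ be ``the least length at which $\mathbf N$ satisfies all permutational identities'': without this step nothing prevents $\mathbf N$ from satisfying, say, $p_5[(12)(34)]$ but not $p_5[(123)]$, and then your parameter $m$ is ill-defined. Second (Lemma~\ref{if u=v=0 then w=0}): if $\mathbf u,\mathbf v,\mathbf w$ are pairwise incomparable words with common content and a cancellable $\mathbf V$ satisfies $\mathbf u\approx0$ and $\mathbf v\approx0$, then it also satisfies $\mathbf w\approx0$. This is the device used in Lemma~\ref{3 0-reduced identities} to descend from the identities $x^ny\approx yx^n\approx0$ that any nil-variety satisfies down to $x^2y\approx xyx\approx yx^2\approx0$, i.e.\ to land inside $\mathbf X_{\infty,\infty}$. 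Both lemmas are proved by constructing \emph{one} schematic separating pair from the given data, not an infinite family of ad~hoc counterexamples. The decomposition $\mathbf V=\mathbf M\vee\mathbf N$ itself is not re-derived but simply quoted from the existing modular-element theory (Proposition~\ref{mod nec}).

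For sufficiency, your classification of identities is in the right spirit, but the permutational case (Subcase~2.3 in the paper) is more delicate than your sketch suggests. When $\mathbf V$ satisfies no permutational identity of length $k$ one must split on whether $\mathbf W$ contains nilpotent semigroups of degree $>k$; in the bad case one manufactures a non-linear word $\mathbf w$ with $\mathbf U\models x_1\cdots x_k\approx\mathbf w$ from a deduction in $\mathbf V\wedge\mathbf U$, transfers $x_1\cdots x_k\approx\mathbf w$ to $\mathbf W$ via the earlier subcases, and only then recovers the permutational identity in $\mathbf W$ through $\mathbf w\approx\sigma[\mathbf w]$ in $\mathbf V\vee\mathbf U=\mathbf V\vee\mathbf W$. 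This interlocking of subcases is where the real work lies and is not visible in your plan.
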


It is easy to see that any 0-reduced semigroup variety is a modular element of $\mathbb{SEM}$ (see~\cite[Proposition~1.1]{Jezek-McKenzie-93}\footnote{We note that the paper~\cite{Jezek-McKenzie-93} has dealt with the lattice of equational theories of semigroups, that is, the dual of $\mathbb{SEM}$ rather than the lattice $\mathbb{SEM}$ itself. When reproducing results from~\cite{Jezek-McKenzie-93}, we adapt them to the terminology of the present article.} or~\cite[Theorem~3.8]{Vernikov-15}). It is asked in~\cite[Question~3.3]{Gusev-Skokov-Vernikov-18}, whether an arbitrary 0-reduced variety is a cancellable element of $\mathbb{SEM}$. Theorem~\ref{main semigroup} shows that the answer is negative. 

To make another comment to Theorem~\ref{main semigroup}, we need a few new definitions and notation. If $\mathbf u$ is a semigroup word then its length is denoted by $\ell(\mathbf u)$. If, otherwise, $\mathbf u$ is a non-semigroup word then we put $\ell(\mathbf u)=\infty$. For an arbitrary word $\mathbf u$, we denote by $\con(\mathbf u)$ the \emph{content} of $\mathbf u$, i.e., the set of all letters occurring in $\mathbf u$. An identity $\mathbf{u\approx v}$ is called \emph{substitutive} if $\mathbf u$ and $\mathbf v$ are semigroup words, $\con(\mathbf u)=\con(\mathbf v)$ and the word $\mathbf v$ is obtained from $\mathbf u$ by renaming of letters. Clearly, any permutational identity is a substitutive one. Other examples of substitutive identities are, for instance, $x^2y^2\approx y^2x^2$, $x^2yz\approx y^2zx$ or $xyx\approx yxy$. It is verified in~\cite[Theorem~2.5]{Vernikov-07} that if a nil-variety of semigroups $\mathbf N$ is a modular element of the lattice $\mathbb{SEM}$ then $\mathbf N$ can be given by 0-reduced and substitutive identities only. Theorem~\ref{main semigroup} shows that for nil-varieties that are cancellable elements of $\mathbb{SEM}$ a stronger claim is true: such varieties can be given by 0-reduced and permutational identities only.

The second main result of the article is the following

\begin{theorem}
\label{main epigroup}
An epigroup variety $\mathbf V$ is a cancellable element of the lattice $\mathbb{EPI}$ if and only if $\mathbf{V=M\vee N}$ where $\mathbf M$ is one of the varieties $\mathbf T$ or $\mathbf{SL}$, while $\mathbf N$ is one of the varieties $\mathbf T$, $\mathbf X_{m,n}$ or $\mathbf Y_{m,n}$ with $2\le m\le n\le\infty$.
\end{theorem}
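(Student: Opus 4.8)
The plan is to derive Theorem~\ref{main epigroup} from Theorem~\ref{main semigroup} through the common sublattice $\mathbb{PER}$. Every variety in the list $\mathbf{M}\vee\mathbf{N}$ is periodic, and periodic epigroup varieties are the same objects as periodic semigroup varieties, with joins and meets computed identically in $\mathbb{EPI}$, $\mathbb{SEM}$ and $\mathbb{PER}$. So it is enough to prove that an epigroup variety $\mathbf V$ is a cancellable element of $\mathbb{EPI}$ if and only if $\mathbf V$ is periodic and a cancellable element of $\mathbb{SEM}$; since $\mathbf{SEM}$ is not periodic, Theorem~\ref{main semigroup} then returns exactly the asserted list. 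The work therefore splits into task~(A), showing that a cancellable element of $\mathbb{EPI}$ must be periodic, and task~(B), showing for periodic $\mathbf V$ that cancellability in $\mathbb{EPI}$ and in $\mathbb{SEM}$ coincide.

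The main tool I would introduce is the map $\mathbf W\mapsto\mathbf W_{\mathrm{gr}}$ sending an epigroup variety to its largest group subvariety; a preliminary lemma establishes that this is a lattice homomorphism from $\mathbb{EPI}$ onto the lattice $\mathbb{GR}$ of all group varieties. For task~(A), note that if $\mathbf Y$ is a group variety then $\mathbf V\wedge\mathbf Y=\mathbf V_{\mathrm{gr}}\wedge\mathbf Y$, while $\mathbf V\vee\mathbf Y=\mathbf V\vee(\mathbf V_{\mathrm{gr}}\vee\mathbf Y)$ depends on $\mathbf Y$ only through $\mathbf V_{\mathrm{gr}}\vee\mathbf Y$ because $\mathbf V_{\mathrm{gr}}\le\mathbf V$. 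Hence restricting the cancellation test to group varieties shows that, if $\mathbf V$ is cancellable in $\mathbb{EPI}$, then $\mathbf V_{\mathrm{gr}}$ is cancellable in $\mathbb{GR}$. Drawing on known results about the extreme scarcity of cancellable elements in the lattice of group varieties, I would force $\mathbf V_{\mathrm{gr}}$ to be trivial, the remaining possibilities (in which $\mathbf V_{\mathrm{gr}}$ is large, for instance all of $\mathbb{GR}$) being disposed of by separate non-cancellation witnesses built from non-group epigroups. A variety with only trivial subgroups is aperiodic, and each of its members is then a periodic semigroup, so $\mathbf V$ is periodic; this settles task~(A).

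For task~(B), the meet side is immediate: any meet $\mathbf V\wedge\mathbf Y$ is a subvariety of the periodic variety $\mathbf V$, hence periodic. The plan is to upgrade this to a full reduction showing that, for periodic $\mathbf V$, the cancellation condition need only be tested against periodic $\mathbf Y,\mathbf Z$. Here the homomorphism $(-)_{\mathrm{gr}}$ again helps: for the list varieties one has $\mathbf V_{\mathrm{gr}}=\mathbf T$, so $\mathbf V\vee\mathbf Y=\mathbf V\vee\mathbf Z$ forces $\mathbf Y_{\mathrm{gr}}=\mathbf Z_{\mathrm{gr}}$, and the purely group-theoretic content of the two test varieties already agrees. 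Once the reduction to periodic test varieties is in place, those tests correspond, with joins and meets preserved, to periodic semigroup varieties, so cancellability of $\mathbf V$ in $\mathbb{EPI}$ matches cancellability in $\mathbb{SEM}$. Theorem~\ref{main semigroup} then closes both directions: the sufficiency (lifting a list variety, already cancellable in $\mathbb{SEM}$, to $\mathbb{EPI}$) and the necessity (a periodic cancellable element of $\mathbb{EPI}$ is cancellable in $\mathbb{SEM}$, hence on the list).

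I expect the main obstacle to be precisely this reduction to periodic test varieties in task~(B). The difficulty is that $\mathbf V\vee\mathbf Y$ is not periodic when $\mathbf Y$ is not, and replacing $\mathbf Y$ by $\mathbf Y\wedge\var\{x^{n}\approx x^{n+m}\}$, its part inside the periodic variety cut out by a periodicity identity of $\mathbf V$, preserves the meet with $\mathbf V$ but destroys the join condition. The homomorphism $(-)_{\mathrm{gr}}$ neutralises the group-theoretic part of this difficulty by forcing $\mathbf Y_{\mathrm{gr}}=\mathbf Z_{\mathrm{gr}}$, but the genuinely mixed epigroups of $\mathbf Y$ and $\mathbf Z$, those that are neither groups nor aperiodic, must still be shown to be pinned down by the periodic and group-theoretic data alone. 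Controlling this mixed content, together with the explicit group-variety and epigroup witnesses needed in task~(A), is where the epigroup-specific analysis, and essentially all the effort beyond Theorem~\ref{main semigroup}, will be concentrated.
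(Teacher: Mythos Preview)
Your plan diverges substantially from the paper's and has a genuine gap precisely where you locate it. For necessity, the paper does not go through $\mathbb{GR}$ at all: it invokes the already-known structure of modular elements of $\mathbb{EPI}$ (Proposition~\ref{mod nec}, from~\cite{Shaprynskii-Skokov-Vernikov-16}), which immediately forces $\mathbf V=\mathbf M\vee\mathbf N$ with $\mathbf N$ a nil-variety, hence $\mathbf V$ is periodic. Your route through ``$\mathbf V_{\mathrm{gr}}$ is cancellable in $\mathbb{GR}$'' is valid as far as it goes, but you then appeal to an unspecified classification of cancellable elements of $\mathbb{GR}$ and to unspecified ``non-cancellation witnesses built from non-group epigroups'' for the large-$\mathbf V_{\mathrm{gr}}$ case; neither ingredient is supplied, and no such classification is quoted in the literature the paper relies on. Once periodicity is in hand, both approaches agree: $\mathbf V$ is cancellable in $\mathbb{PER}$, and since all the witness varieties used in the necessity proof of Theorem~\ref{main semigroup} are periodic (this is Corollary~\ref{nec epi nil}), $\mathbf V$ lands on the list.

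The real problem is sufficiency. You want to lift cancellability of a list variety from $\mathbb{SEM}$ (equivalently $\mathbb{PER}$) to $\mathbb{EPI}$ by reducing the $\mathbb{EPI}$ cancellation test to periodic $\mathbf Y,\mathbf Z$. You correctly identify this reduction as the main obstacle and offer no mechanism for it beyond matching the group parts via $(-)_{\mathrm{gr}}$; the ``mixed'' epigroup content is left open. The paper does \emph{not} perform this reduction. Instead it proves sufficiency for $\mathbb{EPI}$ directly, by an identity-by-identity analysis that handles non-semigroup words using the epigroup calculus (Lemmas~\ref{x^omega}--\ref{unary identity}, the identity $\overline x\approx 0$ in nil-varieties, and the device $x_1\cdots x_{i-1}\cdot\overline{\overline{x_i\cdots x_j}}\cdot x_{j+1}\cdots x_k$). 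In fact the dependency runs opposite to your plan: the sufficiency proof for $\mathbb{SEM}$ is obtained by specialising the epigroup argument, not the other way round. So deriving Theorem~\ref{main epigroup} from Theorem~\ref{main semigroup} via $\mathbb{PER}$, as you propose, is not how the paper proceeds and would require exactly the step you have not supplied.
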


As in the articles~\cite{Shaprynskii-Skokov-Vernikov-16,Skokov-15,Skokov-16,Skokov-18}, the formulation of the result concerning epigroup varieties turns out to be quite similar to its semigroup analogue. However, as we will see below, the proof of Theorem~\ref{main epigroup} essentially uses an epigroup specific.

By analogy with the mentioned above semigroup fact, it is verified in~\cite[Theorem~1.2]{Shaprynskii-Skokov-Vernikov-16} that if a nil-variety of epigroups $\mathbf N$ is a modular element of the lattice $\mathbb{EPI}$ then $\mathbf N$ can be given by 0-reduced and substitutive identities only. Theorem~\ref{main epigroup} shows that, as well as in the semigroup case, for nil-varieties that are cancellable elements of $\mathbb{EPI}$ a stronger claim is true: such varieties can be given by 0-reduced and permutational identities only.

Theorems~\ref{main semigroup} and~\ref{main epigroup} immediately imply

\begin{corollary}
\label{semigroup=epigroup}
A periodic semigroup variety is a cancellable element of the lattice $\mathbb{SEM}$ if and only if it is a cancellable element of the lattice $\mathbb{EPI}$.\qed
\end{corollary}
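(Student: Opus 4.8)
The plan is simply to read off the two characterizations and match them on the periodic part. First I would recall the identification, described in the introduction, between periodic semigroup varieties and periodic epigroup varieties: because pseudoinversion is expressible through multiplication in every periodic epigroup, each periodic variety $\mathbf V$ is a single object that may be regarded simultaneously as an element of $\mathbb{SEM}$ and of $\mathbb{EPI}$, the two lattices sharing the common sublattice $\mathbb{PER}$. Under this identification the corollary asserts exactly that cancellability of such a $\mathbf V$ in $\mathbb{SEM}$ is equivalent to its cancellability in $\mathbb{EPI}$.

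Next I would compare the right-hand sides of Theorems~\ref{main semigroup} and~\ref{main epigroup}. The family of varieties $\mathbf M\vee\mathbf N$, with $\mathbf M\in\{\mathbf T,\mathbf{SL}\}$ and $\mathbf N\in\{\mathbf T,\mathbf X_{m,n},\mathbf Y_{m,n}\}$ for $2\le m\le n\le\infty$, occurs verbatim in both theorems; the semigroup characterization differs from the epigroup one only by the additional variety $\mathbf{SEM}$. Here I would invoke the remark made just after the definitions of $\mathbf X_{m,n}$ and $\mathbf Y_{m,n}$, namely that $\mathbf T$, $\mathbf{SL}$, $\mathbf X_{m,n}$ and $\mathbf Y_{m,n}$ are all periodic, so that every variety $\mathbf M\vee\mathbf N$ is periodic; by contrast $\mathbf{SEM}$, containing the free monogenic semigroup, is not. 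Hence among periodic varieties the two lists coincide.

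Finally I would assemble the equivalence. Let $\mathbf V$ be periodic. By Theorem~\ref{main semigroup}, $\mathbf V$ is cancellable in $\mathbb{SEM}$ if and only if $\mathbf V=\mathbf{SEM}$ or $\mathbf V=\mathbf M\vee\mathbf N$; periodicity of $\mathbf V$ excludes the first alternative, so this holds if and only if $\mathbf V=\mathbf M\vee\mathbf N$ for admissible $\mathbf M,\mathbf N$. By Theorem~\ref{main epigroup}, $\mathbf V$ is cancellable in $\mathbb{EPI}$ if and only if $\mathbf V=\mathbf M\vee\mathbf N$ for the same admissible $\mathbf M,\mathbf N$. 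The two conditions are identical, giving the result. There is essentially no genuine obstacle: the only point to record is that $\mathbf{SEM}$, the sole entry distinguishing the two characterizations, lies outside $\mathbb{PER}$, which is exactly why the statement is restricted to periodic varieties and why the corollary follows immediately from the two theorems.
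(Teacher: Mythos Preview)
Your proposal is correct and is exactly the approach the paper intends: the corollary is stated with a \qed immediately after ``Theorems~\ref{main semigroup} and~\ref{main epigroup} immediately imply'', and your argument simply spells out that immediacy by observing that the two lists of cancellable varieties agree on periodic varieties since the only discrepancy, $\mathbf{SEM}$, is non-periodic.
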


We does not know, whether the set of all cancellable elements of an arbitrary lattice $L$ forms a sublattice of $L$. Theorems~\ref{main semigroup} and~\ref{main epigroup} imply immediately that for lattice $\mathbb{SEM}$ and $\mathbb{EPI}$ the answer is affirmative. Moreover, the following is true.

\begin{corollary}
\label{sublattice}
Let $L$ be one of the lattices $\mathbb{SEM}$ or $\mathbb{EPI}$. The class of all cancellable elements of $L$ forms a countably infinite distributive sublattice of $L$. 
\end{corollary}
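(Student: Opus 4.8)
The plan is to read off the explicit list of cancellable elements furnished by Theorems~\ref{main semigroup} and~\ref{main epigroup} and to verify directly that this list is closed under both lattice operations and carries a distributive structure. Write $\mathcal{M}=\{\mathbf{T},\mathbf{SL}\}$ and $\mathcal{N}=\{\mathbf{T}\}\cup\{\mathbf{X}_{m,n},\mathbf{Y}_{m,n}\mid 2\le m\le n\le\infty\}$. By the theorems, the set $C$ of cancellable elements of $L$ equals $\{\mathbf{M}\vee\mathbf{N}\mid\mathbf{M}\in\mathcal{M},\ \mathbf{N}\in\mathcal{N}\}$ when $L=\mathbb{EPI}$, and equals this same set together with the extra top element $\mathbf{SEM}$ when $L=\mathbb{SEM}$. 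Countability is then immediate, since $\mathcal{N}$ is indexed by the countable set of admissible pairs $(m,n)$ together with a choice of the symbol $\mathbf{X}$ or $\mathbf{Y}$, and there are infinitely many such elements; hence $C$ is countably infinite.

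First I would analyse the set $\mathcal{N}$ of nil-varieties on its own. Meets here are harmless: the meet of two members is obtained by uniting their defining identity systems, and one checks that this again lies in $\mathcal{N}$, the parameters $m,n$ behaving like minima and the symbol $\mathbf{Y}$ being absorbing. The joins require more care, but using the standard description of the free objects in these 0-reduced permutational varieties one shows that each join is again of the form $\mathbf{X}_{m,n}$ or $\mathbf{Y}_{m,n}$, with the parameters now behaving like maxima. Collecting these computations exhibits an order-embedding of $\mathcal{N}$ into a direct product of chains that preserves both operations; hence $\mathcal{N}$ is a distributive sublattice of $L$.

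Next I would exploit the special behaviour of $\mathbf{SL}$. The two facts needed are that $\mathbf{SL}\wedge\mathbf{N}=\mathbf{T}$ for every $\mathbf{N}\in\mathcal{N}$ (a nil semilattice is trivial) and that $\mathbf{SL}$ is a \emph{neutral} element of $L$ (a known fact that also follows by direct verification). Granting these, the map $\varphi\colon\mathcal{M}\times\mathcal{N}\to C$ given by $\varphi(\mathbf{M},\mathbf{N})=\mathbf{M}\vee\mathbf{N}$ is a lattice isomorphism onto $C$ (onto $C\setminus\{\mathbf{SEM}\}$ in the semigroup case), and in particular all meets and joins taken in the ambient lattice $L$ land back in $C$, so that $C$ is a sublattice. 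Indeed, surjectivity is the definition of $C$; join-preservation is clear since $\mathcal{M}$ is a chain and $\mathcal{N}$ is join-closed; and meet-preservation, after splitting according to whether each $\mathbf{M}_i$ is $\mathbf{T}$ or $\mathbf{SL}$, follows in the three resulting cases from triviality (both $\mathbf{T}$), from standardness of $\mathbf{SL}$ (exactly one $\mathbf{SL}$), and from distributivity of $\mathbf{SL}$ (both $\mathbf{SL}$), each combined with $\mathbf{SL}\wedge\mathbf{N}=\mathbf{T}$ and the meet formula in $\mathcal{N}$. Injectivity follows because $\mathbf{M}=(\mathbf{M}\vee\mathbf{N})\wedge\mathbf{SL}$ recovers the first coordinate while $\mathbf{N}$ is recovered as the largest nil-subvariety of $\mathbf{M}\vee\mathbf{N}$.

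Finally, since $\mathcal{M}$ is a two-element chain and $\mathcal{N}$ is distributive, the product $\mathcal{M}\times\mathcal{N}$ is distributive; in the epigroup case $C\cong\mathcal{M}\times\mathcal{N}$ and we are done, while in the semigroup case $C$ is obtained from this distributive lattice by adjoining the top element $\mathbf{SEM}$, which preserves distributivity. The main obstacle, I expect, is not this abstract assembly but the two computational inputs feeding it: verifying that $\mathcal{N}$ is closed under joins, joins of varieties being generally hard to control, and establishing the neutrality of $\mathbf{SL}$. Both, however, are either routine from the shape of the defining identities or available from the existing theory of special elements of $\mathbb{SEM}$ and $\mathbb{EPI}$.
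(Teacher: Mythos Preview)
Your proposal is correct and follows essentially the same route as the paper: identify the cancellable nil-varieties $\mathcal{N}$ as a countably infinite distributive lattice, then realize the full set of cancellable elements as the direct product of $\mathcal{N}$ with the two-element chain $\{\mathbf{T},\mathbf{SL}\}$, adjoining $\mathbf{SEM}$ on top in the semigroup case. The paper's own proof is considerably terser---it exhibits $\mathcal{N}$ via a Hasse diagram (Fig.~\ref{SEM canc diagram}) and simply asserts the product decomposition---so your explicit appeal to the neutrality of $\mathbf{SL}$ and your care with closure of $\mathcal{N}$ under joins supply justification that the paper leaves to the reader.
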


\begin{proof}
The varieties $\mathbf T$, $\mathbf X_{m,n}$ and $\mathbf Y_{m,n}$ with $2\le m\le n\le\infty$ form a lattice shown in Fig.~\ref{SEM canc diagram}. Evidenly, this lattice is countably infinite and distributive. The whole lattice of cancellable elements of $\mathbb{EPI}$ is the direct product of the lattice shown in Fig.~\ref{SEM canc diagram} and the 2-element chain (consisting of the varieties $\mathbf T$ and $\mathbf{SL}$). Finally, the whole lattice of cancellable elements of $\mathbb{SEM}$ is the previous lattice with the new greatest element (the variety $\mathbf{SEM}$) adjoined.
\end{proof}

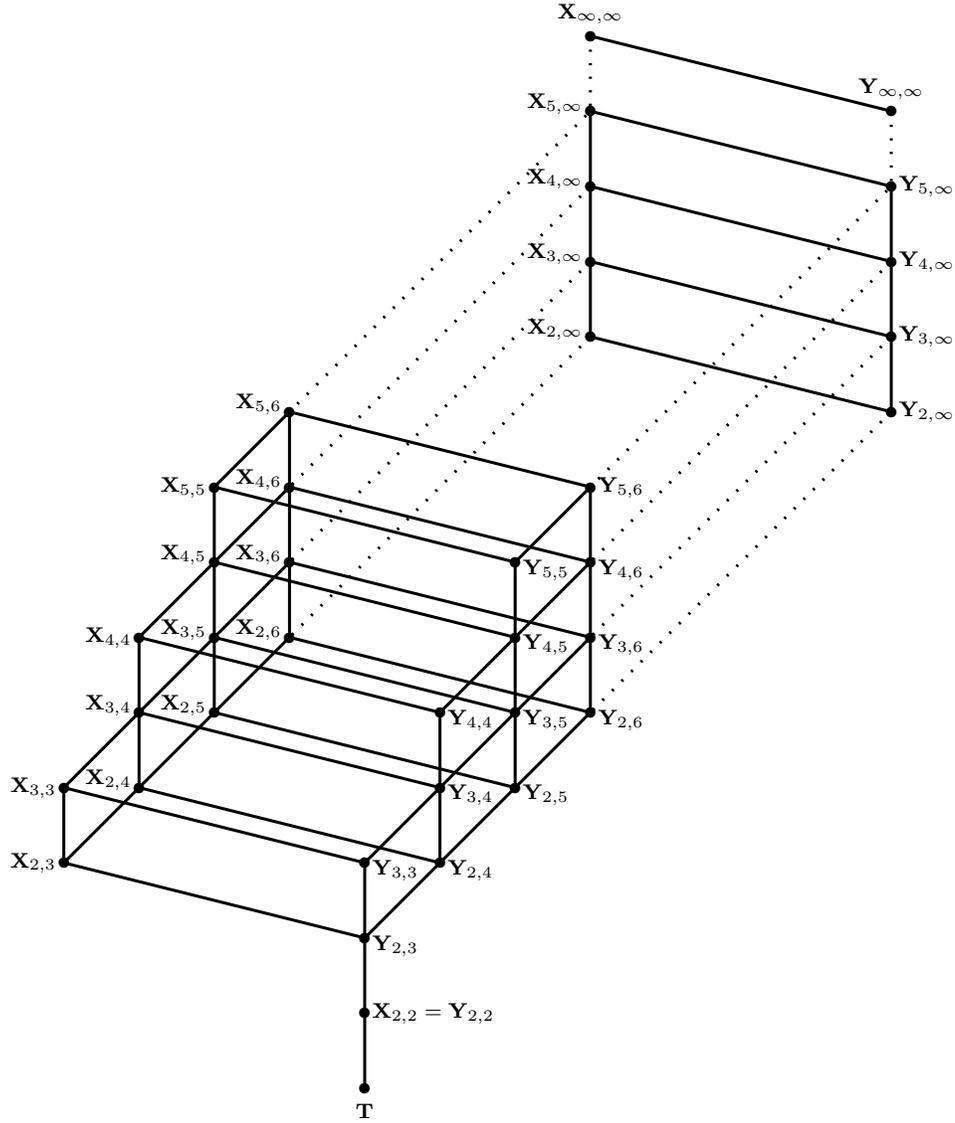
\begin{figure}[tbh]
\begin{center}
\unitlength=1mm
\linethickness{0.4pt}
\begin{picture}(127,150)
\put(9,35){\circle*{1.33}}
\put(9,45){\circle*{1.33}}
\put(19,45){\circle*{1.33}}
\put(19,55){\circle*{1.33}}
\put(19,65){\circle*{1.33}}
\put(29,55){\circle*{1.33}}
\put(29,65){\circle*{1.33}}
\put(29,75){\circle*{1.33}}
\put(29,85){\circle*{1.33}}
\put(39,65){\circle*{1.33}}
\put(39,75){\circle*{1.33}}
\put(39,85){\circle*{1.33}}
\put(39,95){\circle*{1.33}}
\put(49,5){\circle*{1.33}}
\put(49,15){\circle*{1.33}}
\put(49,25){\circle*{1.33}}
\put(49,35){\circle*{1.33}}
\put(59,35){\circle*{1.33}}
\put(59,45){\circle*{1.33}}
\put(59,55){\circle*{1.33}}
\put(69,45){\circle*{1.33}}
\put(69,55){\circle*{1.33}}
\put(69,65){\circle*{1.33}}
\put(69,75){\circle*{1.33}}
\put(79,55){\circle*{1.33}}
\put(79,65){\circle*{1.33}}
\put(79,75){\circle*{1.33}}
\put(79,85){\circle*{1.33}}
\put(79,105){\circle*{1.33}}
\put(79,115){\circle*{1.33}}
\put(79,125){\circle*{1.33}}
\put(79,135){\circle*{1.33}}
\put(79,145){\circle*{1.33}}
\put(119,95){\circle*{1.33}}
\put(119,105){\circle*{1.33}}
\put(119,115){\circle*{1.33}}
\put(119,125){\circle*{1.33}}
\put(119,135){\circle*{1.33}}
\gasset{AHnb=0,linewidth=.4}
\drawline[dash={0.3 1.5}{0}](39,65)(79,105)
\drawline[dash={0.3 1.5}{0}](39,75)(79,115)
\drawline[dash={0.3 1.5}{0}](39,85)(79,125)
\drawline[dash={0.3 1.5}{0}](39,95)(79,135)(79,145)
\drawline[dash={0.3 1.5}{0}](79,55)(119,95)
\drawline[dash={0.3 1.5}{0}](79,65)(119,105)
\drawline[dash={0.3 1.5}{0}](79,75)(119,115)
\drawline[dash={0.3 1.5}{0}](79,85)(119,125)(119,135)
\drawline(49,5)(49,35)(9,45)(9,35)(49,25)(79,55)(79,85)(39,95)(39,65)(9,35)
\drawline(9,45)(39,75)(79,65)(49,35)
\drawline(29,55)(69,45)(69,75)(29,85)
\drawline(29,55)(29,85)(39,95)
\drawline(19,55)(59,45)
\drawline(19,65)(59,55)
\drawline(29,65)(69,55)
\drawline(29,75)(69,65)
\drawline(39,65)(79,55)
\drawline(69,75)(79,85)
\drawline(79,115)(119,105)
\drawline(79,125)(119,115)
\drawline(79,145)(119,135)
\drawpolygon(19,45)(19,65)(39,85)(79,75)(59,55)(59,35)
\drawpolygon(79,105)(79,135)(119,125)(119,95)
\footnotesize
\put(49,2){\makebox(0,0)[cc]{$\mathbf T$}}
\put(50,15){\makebox(0,0)[lc]{$\mathbf X_{2,2}=\mathbf Y_{2,2}$}}
\put(78,106){\makebox(0,0)[rc]{$\mathbf X_{2,\infty}$}}
\put(8,35){\makebox(0,0)[rc]{$\mathbf X_{2,3}$}}
\put(8,45){\makebox(0,0)[rc]{$\mathbf X_{3,3}$}}
\put(78,116){\makebox(0,0)[rc]{$\mathbf X_{3,\infty}$}}
\put(18,46){\makebox(0,0)[rc]{$\mathbf X_{2,4}$}}
\put(18,56){\makebox(0,0)[rc]{$\mathbf X_{3,4}$}}
\put(18,65){\makebox(0,0)[rc]{$\mathbf X_{4,4}$}}
\put(78,126){\makebox(0,0)[rc]{$\mathbf X_{4,\infty}$}}
\put(28,56){\makebox(0,0)[rc]{$\mathbf X_{2,5}$}}
\put(28,66){\makebox(0,0)[rc]{$\mathbf X_{3,5}$}}
\put(28,76){\makebox(0,0)[rc]{$\mathbf X_{4,5}$}}
\put(28,85){\makebox(0,0)[rc]{$\mathbf X_{5,5}$}}
\put(78,136){\makebox(0,0)[rc]{$\mathbf X_{5,\infty}$}}
\put(38,66){\makebox(0,0)[rc]{$\mathbf X_{2,6}$}}
\put(38,76){\makebox(0,0)[rc]{$\mathbf X_{3,6}$}}
\put(38,86){\makebox(0,0)[rc]{$\mathbf X_{4,6}$}}
\put(38,96){\makebox(0,0)[rc]{$\mathbf X_{5,6}$}}
\put(79,148){\makebox(0,0)[cc]{$\mathbf X_{\infty,\infty}$}}
\put(120,95){\makebox(0,0)[lc]{$\mathbf Y_{2,\infty}$}}
\put(50,24){\makebox(0,0)[lc]{$\mathbf Y_{2,3}$}}
\put(50,34){\makebox(0,0)[lc]{$\mathbf Y_{3,3}$}}
\put(120,105){\makebox(0,0)[lc]{$\mathbf Y_{3,\infty}$}}
\put(60,34){\makebox(0,0)[lc]{$\mathbf Y_{2,4}$}}
\put(60,44){\makebox(0,0)[lc]{$\mathbf Y_{3,4}$}}
\put(60,54){\makebox(0,0)[lc]{$\mathbf Y_{4,4}$}}
\put(120,115){\makebox(0,0)[lc]{$\mathbf Y_{4,\infty}$}}
\put(70,44){\makebox(0,0)[lc]{$\mathbf Y_{2,5}$}}
\put(70,54){\makebox(0,0)[lc]{$\mathbf Y_{3,5}$}}
\put(70,64){\makebox(0,0)[lc]{$\mathbf Y_{4,5}$}}
\put(70,74){\makebox(0,0)[lc]{$\mathbf Y_{5,5}$}}
\put(120,125){\makebox(0,0)[lc]{$\mathbf Y_{5,\infty}$}}
\put(80,54){\makebox(0,0)[lc]{$\mathbf Y_{2,6}$}}
\put(80,64){\makebox(0,0)[lc]{$\mathbf Y_{3,6}$}}
\put(80,74){\makebox(0,0)[lc]{$\mathbf Y_{4,6}$}}
\put(80,85){\makebox(0,0)[lc]{$\mathbf Y_{5,6}$}}
\put(119,138){\makebox(0,0)[cc]{$\mathbf Y_{\infty,\infty}$}}
\end{picture}
\caption{The lattice of cancellable nil-varieties}
\label{SEM canc diagram}
\end{center}
\end{figure}

The article consists of four sections. Section~\ref{prel} contains some preliminary results. In Sections~\ref{only if part} and~\ref{if part} we verify respectively the ``only if'' part and the ``if'' part of both the theorems.

\section{Preliminaries}
\label{prel}

\subsection{The join with a neutral atom}
\label{join with na}

If $L$ is a lattice and $a\in L$ then we denote by $[a)$ the \emph{principal filter} of $L$ generated by $a$. In other words, $[a)=\{x\in L\mid x\ge a\}$.

\begin{lemma}[\!\!{\mdseries\cite[Lemmas~2.1 and~2.2]{Gusev-Skokov-Vernikov-18}}]
\label{neutral atom}
Suppose that $L$ is a lattice and $a\in L$ is an atom and a neutral element. For an element $x\in L$, the following are equivalent:
\begin{itemize}
\item[\textup{a)}] $x$ is cancellable;
\item[\textup{b)}] $x\vee a$ is cancellable;
\item[\textup{c)}] the implication
$$
x\vee y=x\vee z\ \&\ x\wedge y=x\wedge z\rightarrow y=z
$$
is true for any $y,z\in[a)$.\qed
\end{itemize}
\end{lemma}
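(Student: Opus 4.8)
The plan is to prove the equivalences by passing through the ``two coordinates'' of an element relative to $a$, namely $w\mapsto w\wedge a$ and $w\mapsto w\vee a$. Two preliminary observations carry most of the load. First, since $a$ is neutral, for any $u,v\in L$ the sublattice generated by $a,u,v$ is distributive, so I may freely use the distributive laws whenever one of the three elements involved is $a$; in particular $a$ itself is cancellable (distributive lattices are cancellative). Second, since $a$ is an atom, $w\wedge a\in\{0,a\}$ for every $w\in L$, where $0$ denotes the least element of $L$. Consequently, to deduce $y=z$ from hypotheses of cancellation type it suffices to establish the two coordinate equalities $y\vee a=z\vee a$ and $y\wedge a=z\wedge a$ and then invoke the cancellability of $a$. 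I will prove the equivalences in the form \textup{(a)}$\Leftrightarrow$\textup{(c)} and \textup{(b)}$\Leftrightarrow$\textup{(c)}.

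For \textup{(a)}$\Rightarrow$\textup{(c)} there is nothing to do, as \textup{(c)} is merely the defining implication of cancellability restricted to the filter $[a)$. The substance is \textup{(c)}$\Rightarrow$\textup{(a)}. Given $y,z$ with $x\vee y=x\vee z$ and $x\wedge y=x\wedge z$, I first obtain the join coordinate: applying \textup{(c)} to the pair $y\vee a,z\vee a\in[a)$, whose two hypotheses reduce, via the distributive laws for $a$ together with the given equalities, to $(x\vee y)\vee a=(x\vee z)\vee a$ and $(x\wedge y)\vee(x\wedge a)=(x\wedge z)\vee(x\wedge a)$, yields $y\vee a=z\vee a$. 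For the meet coordinate I argue by contradiction: if, say, $a\le y$ but $z\wedge a=0$, then $y=y\vee a=z\vee a$ forces $z\le y$, and expanding $x\wedge y=x\wedge(z\vee a)$ and $x\vee y=(x\vee z)\vee a$ by distributivity gives $x\wedge a=0$ and $a\le x\vee z$; but then $a=a\wedge(x\vee z)=(a\wedge x)\vee(a\wedge z)=0$, contradicting that $a$ is an atom. Hence $y\wedge a=z\wedge a$, and the cancellability of $a$ delivers $y=z$.

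The equivalence \textup{(b)}$\Leftrightarrow$\textup{(c)} is handled by the same two-coordinate bookkeeping. For \textup{(b)}$\Rightarrow$\textup{(c)} I note that for $y,z\in[a)$ the two hypotheses of \textup{(c)} rewrite, using $a\le y,z$ and the distributive laws, exactly as the hypotheses needed to apply cancellability of $x\vee a$, which then gives $y=z$. For \textup{(c)}$\Rightarrow$\textup{(b)}, starting from $(x\vee a)\vee y=(x\vee a)\vee z$ and $(x\vee a)\wedge y=(x\vee a)\wedge z$, I first extract $x\wedge y=x\wedge z$ (meeting the second equality with $x$ and using $x\le x\vee a$), then run the \textup{(c)}-argument on $y\vee a,z\vee a$ to obtain the join coordinate, and finally secure the meet coordinate: in the mixed case $a\le y$, $z\wedge a=0$ one has $a\le(x\vee a)\wedge y=(x\vee a)\wedge z\le z$, contradicting $z\wedge a=0$. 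Cancellability of $a$ then yields $y=z$.

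The routine verifications throughout are the repeated applications of the distributive laws valid for the neutral element $a$. The only genuinely delicate point, and the place where atomicity is indispensable, is the meet coordinate $y\wedge a=z\wedge a$: one must rule out the mixed case in which exactly one of $y,z$ lies above $a$, and the decisive identity is $a=a\wedge(x\vee z)=(a\wedge x)\vee(a\wedge z)=0$, which collapses $a$ to the least element and so cannot occur. Everything else is forced once the two coordinate equalities are in hand.
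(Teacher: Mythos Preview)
The paper does not prove this lemma; it is quoted from \cite{Gusev-Skokov-Vernikov-18} and marked with \qed, so there is no in-paper argument to compare against. Your proof is correct and self-contained: the two-coordinate decomposition $w\mapsto(w\wedge a,\,w\vee a)$ is exactly the standard tool associated with a neutral element, the distributive identities you invoke are all instances of neutrality of $a$ (applied inside the sublattice generated by $a$ and two other elements), and your treatment of the ``mixed'' case---deriving $a=a\wedge(x\vee z)=(a\wedge x)\vee(a\wedge z)=0$ to reach a contradiction---is the right place to use that $a$ is an atom. The verifications you label ``routine'' really are routine, and the argument for \textup{(c)}$\Rightarrow$\textup{(b)} goes through as written once one notes that $a\le(x\vee a)\wedge y$ in the mixed case.
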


It is generally known that the variety $\mathbf{SL}$ is an atom of the lattice $\mathbb{SEM}$ and therefore, of the lattice $\mathbb{EPI}$. Further, $\mathbf{SL}$ is a neutral element of $\mathbb{SEM}$~\cite[Proposition~4.1]{Volkov-05} and $\mathbb{EPI}$~\cite[Theorem~1.1]{Shaprynskii-Skokov-Vernikov-16}. These facts and Lemma~\ref{neutral atom} imply immediately

\begin{corollary}
\label{join with SL}
For a variety of semigroups \textup[epigroups\textup] $\mathbf V$, the following are equivalent:
\begin{itemize}
\item[\textup{a)}] $\mathbf V$ is a cancellable element of the lattice $\mathbb{SEM}$ \textup[respectively $\mathbb{EPI}$\textup];
\item[\textup{b)}] $\mathbf{V\vee SL}$ is a cancellable element of the lattice $\mathbb{SEM}$ \textup[respectively $\mathbb{EPI}$\textup];
\item[\textup{c)}] the implication
$$
\mathbf{V\vee U=V\vee W\ \&\ V\wedge U=V\wedge W\rightarrow U=W}
$$
is true for any varieties of semigroups \textup[epigroups\textup] $\mathbf{U,W\supseteq SL}$.\qed
\end{itemize}
\end{corollary}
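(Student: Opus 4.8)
The plan is to obtain Corollary~\ref{join with SL} as a direct instantiation of Lemma~\ref{neutral atom}, handling the semigroup and epigroup cases simultaneously. First I would fix $L$ to be $\mathbb{SEM}$ (respectively $\mathbb{EPI}$) and apply the lemma with $a=\mathbf{SL}$ and $x=\mathbf V$. To do so legitimately, I must verify that $\mathbf{SL}$ meets the two hypotheses imposed on $a$: that it is an atom and that it is a neutral element of the ambient lattice. Both facts are recorded immediately above the corollary: $\mathbf{SL}$ is an atom of $\mathbb{SEM}$, and hence of $\mathbb{EPI}$, and it is neutral in $\mathbb{SEM}$ by~\cite[Proposition~4.1]{Volkov-05} and in $\mathbb{EPI}$ by~\cite[Theorem~1.1]{Shaprynskii-Skokov-Vernikov-16}.

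Granting these facts, Lemma~\ref{neutral atom} yields at once the equivalence of its conditions (a) and (b), which are verbatim conditions (a) and (b) of the corollary (with $x\vee a=\mathbf{V\vee SL}$). The only point requiring a word of justification is the translation of condition (c). Here I would observe that, since the partial order underlying both $\mathbb{SEM}$ and $\mathbb{EPI}$ is class inclusion, the principal filter $[\mathbf{SL})$ is exactly the collection of all varieties $\mathbf U$ with $\mathbf U\supseteq\mathbf{SL}$. Consequently the quantification ``for any $y,z\in[a)$'' in part (c) of Lemma~\ref{neutral atom} becomes precisely ``for any $\mathbf{U,W}\supseteq\mathbf{SL}$'', and the displayed implication matches word for word. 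This establishes the equivalence of conditions (a), (b) and (c) of the corollary.

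There is essentially no combinatorial obstacle in this argument: the entire content has been front-loaded into Lemma~\ref{neutral atom} together with the two external facts about $\mathbf{SL}$. The one place demanding care is to ensure that the \emph{neutrality} of $\mathbf{SL}$ (and not merely its modularity or some weaker special-element property) is the property being invoked, since Lemma~\ref{neutral atom} requires its atom to be neutral; anything weaker would not license the application. Beyond that, the proof is a mechanical specialization, which is exactly why the corollary can be asserted immediately after Lemma~\ref{neutral atom}.
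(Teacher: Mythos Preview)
Your proposal is correct and matches the paper's approach exactly: the corollary is stated with a \qed and no explicit proof, having been preceded by the sentence ``These facts and Lemma~\ref{neutral atom} imply immediately'', where ``these facts'' are precisely the atomicity and neutrality of $\mathbf{SL}$ that you cite. There is nothing to add.
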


\subsection{Cancellable elements in subgroup lattices of finite symmetric groups}
\label{cancellable subgroups}

If $n$ is a natural number and $1\le i\le n$ then we denote by $\Stab_n(i)$ the set of all permutations $\pi\in S_n$ with $i\pi=i$. Obviously, $\Stab_n(i)$ is a subgroup in $S_n$. Let $T$ be the trivial group, $T_{ij}$ be the group generated by the transposition $(ij)$, $C_{ijk}$ and $C_{ijk\ell}$ be the groups generated by the cycles $(ijk)$ and $(ijk\ell)$ respectively, $P_{ij,k\ell}$ be the group generated by the disjoint transpositions $(ij)$ and $(k\ell)$, $A_n$ be the alternating subgroup of $S_n$ and $V_4$ be the Klein four-group. The subgroup lattice of the group $G$ is denoted by $\Sub(G)$. We need to know the structure of the lattices $\Sub(S_3)$ and $\Sub(S_4)$. It is generally known and easy to check that the first of these two lattices has the form shown in Fig.~\ref{Sub(S_3)}. Direct routine calculations allow to verify that the lattice $\Sub(S_4)$ is as shown in Fig.~\ref{Sub(S_4)}. 

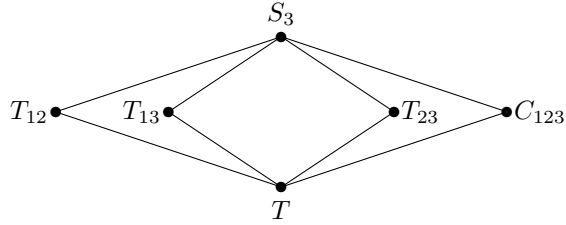
\begin{figure}[tbh]
\unitlength=1mm
\special{em:linewidth 0.4pt}
\linethickness{0.4pt}
\begin{center}
\begin{picture}(73,29)
\put(6,14){\circle*{1.33}}
\put(21,14){\circle*{1.33}}
\put(36,4){\circle*{1.33}}
\put(36,24){\circle*{1.33}}
\put(51,14){\circle*{1.33}}
\put(66,14){\circle*{1.33}}
\gasset{AHnb=0}
\drawline(36,4)(6,14)(36,24)(21,14)(36,4)(51,14)(36,24)(66,14)(36,4)
\put(5,14){\makebox(0,0)[rc]{$T_{12}$}}
\put(20,14){\makebox(0,0)[rc]{$T_{13}$}}
\put(52,14){\makebox(0,0)[lc]{$T_{23}$}}
\put(67,14){\makebox(0,0)[lc]{$C_{123}$}}
\put(36,27){\makebox(0,0)[cc]{$S_3$}}
\put(36,1){\makebox(0,0)[cc]{$T$}}
\end{picture}
\end{center}
\caption{the lattice $\Sub(S_3)$}
\label{Sub(S_3)}
\end{figure}

\begin{figure}[tbh]
\unitlength=0.85mm
\special{em:linewidth 0.4pt}
\linethickness{0.4pt}
\begin{center}
\begin{picture}(135,70)
\put(73,5){\circle*{1.56}}
\put(13,20){\circle*{1.56}}
\put(23,20){\circle*{1.56}}
\put(33,20){\circle*{1.56}}
\put(43,20){\circle*{1.56}}
\put(53,20){\circle*{1.56}}
\put(63,20){\circle*{1.56}}
\put(73,20){\circle*{1.56}}
\put(83,20){\circle*{1.56}}
\put(93,20){\circle*{1.56}}
\put(103,20){\circle*{1.56}}
\put(113,20){\circle*{1.56}}
\put(123,20){\circle*{1.56}}
\put(133,20){\circle*{1.56}}
\put(3,35){\circle*{1.56}}
\put(13,35){\circle*{1.56}}
\put(23,35){\circle*{1.56}}
\put(33,35){\circle*{1.56}}
\put(43,35){\circle*{1.56}}
\put(53,35){\circle*{1.56}}
\put(63,35){\circle*{1.56}}
\put(73,35){\circle*{1.56}}
\put(83,35){\circle*{1.56}}
\put(93,35){\circle*{1.56}}
\put(103,35){\circle*{1.56}}
\put(73,50){\circle*{1.56}}
\put(83,50){\circle*{1.56}}
\put(93,50){\circle*{1.56}}
\put(113,50){\circle*{1.56}}
\put(73,65){\circle*{1.56}}
\gasset{AHnb=0}
\drawline(73,5)(13,20)(3,35)(23,20)(73,5)(33,20)(3,35)(73,65)(73,5)(83,20)(103,35)(93,20)(73,5)(103,20)(113,50)(93,20)(73,5)(123,20)(113,50)(133,20)(73,5)(43,20)(23,35)(73,65)(13,35)(13,20)(73,50)(103,35)(83,50)(73,65)(93,50)(103,35)(113,50)(73,65)(33,35)(33,20)(93,50)(93,20)(63,35)(63,20)(23,35)(123,20)
\drawline(23,35)(23,20)(83,50)(83,20)(53,35)(53,20)(73,5)(63,20)(13,35)(53,20)(33,35)(43,20)(43,35)(73,20)(103,35)
\drawline(33,20)(33,35)(133,20)
\drawline(13,35)(113,20)(73,5)
\drawline(113,20)(113,50)
\drawline(3,35)(103,20)
\gasset{AHnb=1}
\drawline(5,42)(3,36)
\drawline(17,48)(13,36)
\drawline(32,54)(23,36)
\drawline(47,60)(33,36)
\footnotesize
\put(113,53){\makebox(0,0)[cc]{$A_4$}}
\put(128,17){\makebox(0,0)[lc]{$C_{123}$}}
\put(87,38){\makebox(0,0)[rc]{$C_{1234}$}}
\put(118,17){\makebox(0,0)[lc]{$C_{124}$}}
\put(97,38){\makebox(0,0)[rc]{$C_{1243}$}}
\put(77,38){\makebox(0,0)[rc]{$C_{1324}$}}
\put(108,17){\makebox(0,0)[lc]{$C_{134}$}}
\put(98,17){\makebox(0,0)[lc]{$C_{234}$}}
\put(76,17){\makebox(0,0)[rc]{$P_{12,34}$}}
\put(86,17){\makebox(0,0)[rc]{$P_{13,24}$}}
\put(96,17){\makebox(0,0)[rc]{$P_{14,23}$}}
\put(73,68){\makebox(0,0)[cc]{$S_4$}}
\put(7,43){\makebox(0,0)[cc]{$\Stab_4(1)$}}
\put(19,49){\makebox(0,0)[cc]{$\Stab_4(2)$}}
\put(34,55){\makebox(0,0)[cc]{$\Stab_4(3)$}}
\put(49,61){\makebox(0,0)[cc]{$\Stab_4(4)$}}
\put(73,2){\makebox(0,0)[cc]{$T$}}
\put(43,17){\makebox(0,0)[cc]{$T_{12}$}}
\put(53,17){\makebox(0,0)[cc]{$T_{13}$}}
\put(63,17){\makebox(0,0)[cc]{$T_{14}$}}
\put(33,17){\makebox(0,0)[cc]{$T_{23}$}}
\put(23,17){\makebox(0,0)[cc]{$T_{24}$}}
\put(13,17){\makebox(0,0)[cc]{$T_{34}$}}
\put(104,32){\makebox(0,0)[cc]{$V_4$}}
\end{picture}
\end{center}
\caption{the lattice $\Sub(S_4)$}
\label{Sub(S_4)}
\end{figure}
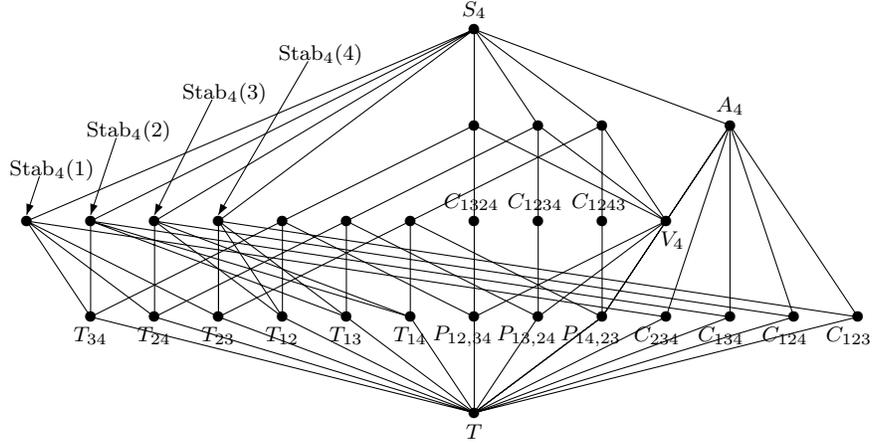

We need the following

\begin{lemma}
\label{subgroups cancellable}
A subgroup $G$ of the group $S_n$ is a cancellable element of the lattice $\Sub(S_n)$ if and only if either $G=T$ or $G=S_n$.
\end{lemma}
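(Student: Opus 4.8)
The plan is to handle the two directions separately, with the ``if'' part being immediate. The subgroups $T$ and $S_n$ are the least and greatest elements of $\Sub(S_n)$, and in any lattice the bottom and top are cancellable: if $G=T$ then $G\vee H=H$ and $G\vee K=K$, so $G\vee H=G\vee K$ already forces $H=K$; dually, if $G=S_n$ then $G\wedge H=H$ and $G\wedge K=K$. So only the converse requires work.

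For the converse I would reformulate cancellability as the injectivity of the map $H\mapsto(G\vee H,\,G\wedge H)=(\langle G,H\rangle,\,G\cap H)$ on $\Sub(S_n)$; thus for a proper nontrivial $G$ I must produce two distinct subgroups having the same join and the same meet with $G$. The key idea is to conjugate a fixed subgroup by an element of $G$. For any $g\in G$ and any subgroup $H$ one has $gGg^{-1}=G$ and $g\in G\subseteq\langle G,H\rangle$, whence $\langle G,gHg^{-1}\rangle=g\langle G,H\rangle g^{-1}=\langle G,H\rangle$ and $G\cap gHg^{-1}=g(G\cap H)g^{-1}$. Consequently, if I can find a subgroup $H$ with $G\cap H=T$ that is \emph{not} normalized by $G$, then choosing $g\in G$ with $gHg^{-1}\neq H$ and setting $K=gHg^{-1}$ gives $G\vee K=G\vee H$, $G\wedge K=g\,T\,g^{-1}=T=G\wedge H$, and $K\neq H$, so $G$ is not cancellable.

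This reduces everything to a purely group-theoretic search. The simplest candidate is $H=\langle(ab)\rangle$ for a transposition: here $G\cap H=T$ precisely when $(ab)\notin G$, and $G$ normalizes $H$ if and only if every element of $G$ fixes $\{a,b\}$ setwise. So it suffices to exhibit a transposition $(ab)\notin G$ whose support $\{a,b\}$ is not setwise stabilized by $G$. I would establish this by analyzing the orbits of $G$ on $\{1,\dots,n\}$. Since $G\neq S_n$, some transposition lies outside $G$. If $G$ is transitive (forcing $n\geq3$), then for any $(ab)\notin G$ the $G$-orbit of $a$ is all of $\{1,\dots,n\}$, so some $g\in G$ sends $a$ to a point outside $\{a,b\}$, and $\{a,b\}$ is not stabilized. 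If $G$ is intransitive, I would pick $a$ in an orbit of size $\geq2$ and $b$ in a different orbit: then $(ab)\notin G$, and moving $a$ within its orbit shows $\{a,b\}$ is not stabilized. The only remaining configuration, all orbits singletons, means $G=T$, which is excluded.

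The main obstacle, and essentially the only delicate point, is this orbit bookkeeping: I must ensure the chosen transposition simultaneously satisfies $(ab)\notin G$ and has a non-stabilized support, ruling out the degenerate situations in which $G$ fixes two or more points or in which the constraint $(ab)\notin G$ seems to ``use up'' the large orbits. Organizing the cases around whether $G$ has an orbit of size $\geq2$ together with a second orbit covers all possibilities and forces $G=T$ otherwise. I expect the explicit descriptions of $\Sub(S_3)$ and $\Sub(S_4)$ to serve only as a consistency check on the statement; the conjugation argument above is uniform in $n$ and does not rely on them.
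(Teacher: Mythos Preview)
Your argument is correct, and it follows a genuinely different route from the paper's own proof. The paper does not argue directly: instead it invokes the classification of \emph{modular} elements of $\Sub(S_n)$ due to Je\v{z}ek (for $n\ge5$ every nontrivial proper modular subgroup contains $A_n$; for $n=4$ it contains $V_4$), and then checks case by case that each such subgroup admits at least two complements, which rules out cancellability. The cases $n\le3$ are read off from the explicit diagrams of $\Sub(S_3)$.

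Your conjugation trick bypasses all of this. By producing a transposition $(ab)\notin G$ whose support is not setwise $G$-invariant, you obtain $H=\langle(ab)\rangle$ and $K=gHg^{-1}\ne H$ with $G\vee H=G\vee K$ and $G\wedge H=G\wedge K=T$ in one stroke, uniformly in $n$ and without appealing to Je\v{z}ek's classification or to the diagrams. The orbit analysis you sketch (transitive versus intransitive, with the observation that intransitive $G\ne T$ has an orbit of size at least~$2$ and a second orbit supplying $b$) is exactly what is needed and covers all cases cleanly. The trade-off is that the paper's proof is shorter to state once one is willing to quote the external modular-element result, whereas yours is self-contained and more elementary; for a reader who does not already have Je\v{z}ek's paper at hand, your argument is preferable.
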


\begin{proof}
If $n\le 2$ then $S_n$ does not contain subgroups different from $T$ and $S_n$. If $n=3$ then the desirable conclusion immediately follows from Fig.~\ref{Sub(S_3)}. Let now $n\ge 4$ and $G$ be a non-singleton proper subgroup of $S_n$. Suppose that $G$ is a cancellable and therefore, modular element of $\Sub(S_n)$. Modular elements of the lattice $\Sub(S_n)$ for an arbitrary $n$ are completely determined in~\cite[Propositions~3.1,~3.7 and~3.8]{Jezek-81}. If $n=4$ then $G\supseteq V_4$ by~\cite[Proposition~3.8]{Jezek-81}, while if $n\ge 5$ then $G\supseteq A_n$ by~\cite[Proposition~3.1]{Jezek-81}. Clearly, it suffices to verify that there are at least two complements to $G$ in $\Sub(S_n)$. Suppose that $G\supseteq A_n$. Then $G=A_n$ because $A_n$ is a maximal proper subgroup in $S_n$. Then all subgroups of the form $T_{ij}$ are complements to $G$ in $\Sub(S_n)$. It remains to consider the case when $n=4$, $V_4\subseteq G\subset S_4$ and $G\ne A_4$. Fig.~\ref{Sub(S_4)} implies that either $G=V_4$ or $G=V_4\vee P_{ij,k\ell}$ for some disjoint transpositions $(ij)$ and $(k\ell)$. If $G=V_4$ then all subgroups of the form $\Stab_4(i)$ are complements to $G$ in $\Sub(S_4)$. Finally, if $G=V_4\vee P_{ij,k\ell}$ then subgroups $T_{ik}$ and $T_{j\ell}$ are complements to $G$ in $\Sub(S_4)$. 
\end{proof}

\subsection{Modular elements of the lattices $\mathbb{SEM}$ and $\mathbb{EPI}$}
\label{mod}

The following claim gives a strong necessary condition for a semigroup [an epigroup] variety to be a modular element in the lattice $\mathbb{SEM}$ [respectively $\mathbb{EPI}$]. Recall that a semigroup variety is called \emph{proper} if it differs from the variety of all semigroups.

\begin{proposition}
\label{mod nec}
If a proper variety of semigroups \textup[a variety of epigroups\textup] $\mathbf V$ is a modular element of the lattice $\mathbb{SEM}$ \textup[respectively $\mathbb{EPI}$\textup] then $\mathbf{V=M\vee N}$ where $\mathbf M$ is one of the varieties $\mathbf T$ or $\mathbf{SL}$, while $\mathbf N$ is a nil-variety.\qed
\end{proposition}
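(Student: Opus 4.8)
The plan is to read the required decomposition as a statement about which atoms of $\mathbb{SEM}$ lie below $\mathbf V$, and to force the correct atoms by testing the modular law against small explicit varieties. Recall from Kalicki and Scott~\cite{Kalicki-Scott-55} that the atoms of $\mathbb{SEM}$ are $\mathbf{SL}$, the varieties $\mathbf{LZ}$ and $\mathbf{RZ}$ of left and right zero semigroups, the nil atom $\mathbf{ZM}$ of semigroups with zero multiplication, and the group atoms $\mathbf A_p=\var(\mathbb Z_p)$ for the primes $p$. A decomposition $\mathbf{V=M\vee N}$ with $\mathbf M\in\{\mathbf T,\mathbf{SL}\}$ and $\mathbf N$ nil forces $\mathbf V$ to be periodic and to contain none of $\mathbf{LZ}$, $\mathbf{RZ}$, $\mathbf A_p$; conversely, a periodic variety whose completely regular subvariety is contained in $\mathbf{SL}$ does admit such a decomposition by the structure theory of periodic combinatorial semigroups. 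So I would split the argument into (i) ruling out the ``bad'' atoms $\mathbf A_p$, $\mathbf{LZ}$, $\mathbf{RZ}$, and (ii) assembling $\mathbf M$ and $\mathbf N$.

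For step (i) I would argue by contraposition. Suppose $\mathbf A_p\le\mathbf V$ for some prime $p$; I would then produce a pair $\mathbf Y\le\mathbf Z$ for which $(\mathbf V\vee\mathbf Y)\wedge\mathbf Z\ne(\mathbf V\wedge\mathbf Z)\vee\mathbf Y$, contradicting modularity. The natural witnesses are group varieties — a variety of abelian groups of exponent $p$ and a slightly larger group variety — chosen so that joining with $\mathbf V$ collapses the interval between them while the meets stay controlled by the group identities valid in $\mathbf V$. A completely parallel computation, now using the bands $\mathbf{LZ}$, $\mathbf{RZ}$ and their joins with $\mathbf{SL}$, would exclude $\mathbf{LZ},\mathbf{RZ}\le\mathbf V$. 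The production of these witnessing pairs is the technical heart of the matter: one must describe $\mathbf V\vee\mathbf Y$ and $\mathbf V\wedge\mathbf Z$ simultaneously, which requires enough control of the identities of $\mathbf V$ near its completely regular part, and I expect this to be the main obstacle.

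Once $\mathbf A_p$, $\mathbf{LZ}$ and $\mathbf{RZ}$ all lie outside $\mathbf V$, the variety $\mathbf V$ is periodic and combinatorial and the idempotents of each of its members form a semilattice, so its completely regular subvariety is $\mathbf T$ or $\mathbf{SL}$. I would then set $\mathbf M=\mathbf{V\wedge SL}$, which equals $\mathbf T$ or $\mathbf{SL}$, and let $\mathbf N$ be the largest nil-subvariety of $\mathbf V$ (well defined because $\mathbf V$ has bounded index). The inclusion $\mathbf{M\vee N\le V}$ is clear; for the reverse inclusion I would invoke the structure of periodic combinatorial semigroups with semilattice of idempotents, namely that each such semigroup is a subdirect product of a semilattice and nil-semigroups and hence lies in $\mathbf{M\vee N}$. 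This yields $\mathbf{V=M\vee N}$. The hypothesis that $\mathbf V$ is proper is essential here, since $\mathbf{SEM}$ itself is trivially a modular element yet admits no decomposition of this form.

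For the epigroup statement the same scheme applies with two changes. Since $\mathbb{EPI}$ has no greatest element there is no $\mathbf{SEM}$ exception, so every modular element of $\mathbb{EPI}$ should decompose; the exclusion of nontrivial groups in step (i) must now be carried out with epigroup test varieties, for instance varieties of completely regular epigroups considered as unary semigroups, which is where the epigroup specifics enter. Once periodicity is secured, pseudoinversion becomes term-definable, so $\mathbf V$ is identified with a periodic semigroup variety and the assembly step (ii) is literally the semigroup one. I expect the periodicity step for epigroups, rather than the assembly, to be the delicate point, because the repertoire of available witnessing varieties differs from the semigroup case.
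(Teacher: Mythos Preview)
The paper does not prove Proposition~\ref{mod nec}; it records it with a \textsc{qed} and cites \cite{Jezek-McKenzie-93}, \cite{Vernikov-07}, \cite{Shaprynskii-12} for the semigroup case and \cite{Shaprynskii-Skokov-Vernikov-16} for the epigroup case. So there is no in-paper argument to compare against, only the external ones; your outline is in the spirit of \cite{Shaprynskii-12}, which proceeds by exhibiting explicit violations of the modular law, but your plan departs from it at a decisive point.

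Your step~(ii) is where the plan breaks. You assert that once $\mathbf A_p$, $\mathbf{LZ}$, $\mathbf{RZ}$ are excluded, the decomposition $\mathbf V=\mathbf M\vee\mathbf N$ follows from general structure theory, in particular because ``each such semigroup is a subdirect product of a semilattice and nil-semigroups''. Both claims are false. Take $S=\{0,a,e\}$ with $e$ an identity and $a^2=0$. Then $S$ is commutative, periodic, combinatorial, and $\var(S)$ contains none of the forbidden atoms. The only nil congruence on $S$ collapses everything (since the class of the idempotent $e$ must be zero, and then $a=ea$ collapses too), so $S$ is not a subdirect product of a semilattice and a nil-semigroup. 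At the varietal level, the greatest nil-subvariety of $\var(S)$ is $\var\{xy\approx yx,\ x^2\approx 0\}$, and $\mathbf{SL}\vee\var\{xy\approx yx,\ x^2\approx 0\}$ satisfies $x^2y\approx xy^2$, while $S$ does not ($e^2a=a\ne 0=ea^2$). Hence $\var(S)\supsetneq\mathbf{SL}\vee\mathbf N$ for the greatest nil $\mathbf N$, so $\var(S)$ admits no decomposition of the required form even though it passes your atom test. The upshot is that modularity is not exhausted by excluding atoms; it must be invoked again to force the equality $\mathbf V=\mathbf M\vee\mathbf N$ (equivalently, to exclude varieties like $\var(S)$), and your plan has no mechanism for this.

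Your step~(i) is reasonable as a strategy but is only a promissory note: you say you ``would then produce a pair $\mathbf Y\le\mathbf Z$'' but never do, and you yourself flag this as ``the technical heart of the matter''. In the cited proofs this is exactly where the work lies, and the witnessing varieties are not obvious; there is no shortcut here.
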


The ``semigroup part'' of this proposition was proved (in slightly weaker form and in some other terminology) in~\cite[Proposition~1.6]{Jezek-McKenzie-93}. A deduction of Proposition~\ref{mod nec} from~\cite[Proposition~1.6]{Jezek-McKenzie-93} was given explicitly in~\cite[Proposition~2.1]{Vernikov-07}. A direct and transparent proof of the ``semigroup half'' of Proposition~\ref{mod nec} not depending on a technique from~\cite{Jezek-McKenzie-93} is provided in~\cite{Shaprynskii-12}. The ``epigroup part'' of Proposition~\ref{mod nec} is a weaker version of~\cite[Theorem~1.2]{Shaprynskii-Skokov-Vernikov-16}.

Since any cancellable element of a lattice is modular, the conclusion of Proposition~\ref{mod nec} remains true whenever $\mathbf V$ is a cancellable element of $\mathbb{SEM}$ or $\mathbb{EPI}$.

\subsection{Identities of certain varieties and classes of varieties semigroups and epigroups}
\label{some identities}

For convenience of references, we formulate the following well-known fact as a lemma.

\begin{lemma}
\label{identities in SL}
An identity $\mathbf{u\approx v}$ holds in the variety $\mathbf{SL}$ if and only if $\con(\mathbf u)=\con(\mathbf v)$.\qed
\end{lemma}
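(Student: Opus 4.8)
The plan is to prove both implications by exploiting the fact that in a semilattice the pseudoinversion reduces to the identity map, so that the unary structure plays no role and the statement reduces to a classical fact about semilattices regarded as plain semigroups.

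First I would verify the ``only if'' part by means of the two-element semilattice $Z=\{0,1\}$ with multiplication given by $ab=\min\{a,b\}$; since $Z$ is idempotent, its pseudoinversion is the identity map and $Z\in\mathbf{SL}$. Suppose $\con(\mathbf u)\ne\con(\mathbf v)$; without loss of generality there is a letter $x\in\con(\mathbf u)\setminus\con(\mathbf v)$. Consider the substitution sending $x$ to $0$ and every other letter to $1$. Because pseudoinversion acts trivially on $Z$, the value of $\mathbf u$ equals the minimum of the values of its letters, which is $0$ as $x$ occurs in $\mathbf u$, while the value of $\mathbf v$ equals $1$ because $x$ does not occur in $\mathbf v$ and all letters of $\mathbf v$ are sent to $1$. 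Hence $\mathbf{u\approx v}$ fails in $Z$ and therefore in $\mathbf{SL}$, a contradiction.

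For the ``if'' part, I would first observe that every element $a$ of a semilattice is idempotent, so $G_a=\{a\}$, $a^\omega=a$ and $\overline a=a$; consequently $\mathbf{SL}$ satisfies $\overline x\approx x$, and by substitution $\overline{\mathbf w}\approx\mathbf w$ for an arbitrary word $\mathbf w$. Using these identities repeatedly, one rewrites any word $\mathbf u$, within $\mathbf{SL}$, into the semigroup word $\widehat{\mathbf u}$ obtained by deleting every occurrence of the unary operation; clearly $\con(\widehat{\mathbf u})=\con(\mathbf u)$. Applying next commutativity and idempotency (the defining identities of $\mathbf{SL}$ as a semigroup variety), $\widehat{\mathbf u}$ is transformed into the product of the letters of $\con(\mathbf u)$ taken once each in a fixed, say alphabetical, order. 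Performing the same reduction on $\mathbf v$ and invoking $\con(\mathbf u)=\con(\mathbf v)$, both words reduce in $\mathbf{SL}$ to one and the same canonical word, whence $\mathbf{u\approx v}$ holds in $\mathbf{SL}$.

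The only epigroup-specific point, and the one requiring care, is the reduction of the unary structure: one must justify that pseudoinversion is the identity map on every semilattice before the classical semigroup argument can be invoked. Once this is established, the remainder is the standard normal-form computation for idempotent commutative semigroups and presents no difficulty.
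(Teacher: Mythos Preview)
Your proof is correct and complete; in particular, you rightly address the unary case by observing that $\overline x\approx x$ in any semilattice, which is needed since the paper's words may involve pseudoinversion. The paper itself gives no proof at all---the lemma is stated as a well-known fact with an immediate \qed---so there is nothing to compare your argument against; your proposal simply supplies the routine details that the authors chose to omit.
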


The first claim of the following statement can be easily deduced from~\cite[Lemma~1]{Sapir-Sukhanov-81}, while the second claim is a simple observation.{\sloppy

}

\begin{lemma}
\label{split}
Let $\mathbf N$ be a nil-variety of semigroups.
\begin{itemize}
\item[\textup{(i)}] If $\mathbf N$ satisfies a non-trivial identity of the form $x_1x_2\cdots x_n\approx\mathbf v$ then either this identity is permutational or $\mathbf N$ satisfies the identity
\begin{equation}
\label{nilpotence}
x_1x_2\cdots x_n\approx 0.
\end{equation}
\item[\textup{(ii)}] If $\mathbf N$ satisfies a non-trivial identity of the form $x^m\approx\mathbf v$ then $\mathbf N$ satisfies also the identity $x^m\approx 0$.\qed
\end{itemize}
\end{lemma}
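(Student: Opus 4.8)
The plan is to treat both parts as manifestations of the single principle that in a nil-variety $\mathbf N$ the free cyclic object is nilpotent, so that $\mathbf N\models x^N\approx0$ for some $N$; consequently any word having a factor $z^N$ is a zero word, and any value that is idempotent must vanish (an idempotent element of a nil-semigroup equals $0$). I would record this observation first and use it repeatedly.

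For part (ii) I would argue purely by substitution. Starting from $x^m\approx\mathbf v$ with $\mathbf v\neq x^m$, collapse every letter of $\mathbf v$ onto $x$; this yields $x^m\approx x^{\ell(\mathbf v)}$. If $\ell(\mathbf v)\neq m$ then, multiplying repeatedly, one reaches an exponent $\geq N$ and concludes $x^m\approx0$. If $\ell(\mathbf v)=m$ but $\mathbf v\neq x^m$, then $\mathbf v$ contains a letter $y\neq x$; substituting $y\mapsto x^2$ and the remaining letters by $x$ produces $x^m\approx x^k$ with $k>m$, and the same pumping gives $x^m\approx0$. This is the routine half.

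For part (i), write $\mathbf u=x_1x_2\cdots x_n$ and assume $\mathbf N\not\models\mathbf u\approx0$; I must then show the identity is permutational. Fix $S\in\mathbf N$ and a substitution with $a_1a_2\cdots a_n\neq0$. The first step is to pin down the content. If some letter occurs in $\mathbf v$ but not in $\mathbf u$, reassigning that letter to $0$ forces the right-hand side to $0$ while leaving $a_1\cdots a_n\neq0$ on the left, a contradiction; symmetrically, if some $x_i$ is absent from $\mathbf v$, reassigning $a_i$ to $0$ kills the left-hand side but not the right. Hence $\con(\mathbf v)=\con(\mathbf u)=\{x_1,\dots,x_n\}$. (The missing-letter case can alternatively be handled syntactically: substituting $x_i\mapsto x_it$ and then $t$ by the cyclic completion $x_{i+1}\cdots x_nx_1\cdots x_i$ of $\mathbf u$ turns the identity into $\mathbf u\approx\mathbf u^2$, whence $\mathbf u\approx0$ by idempotency.)

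It remains to show that, once $\con(\mathbf v)=\con(\mathbf u)$, the word $\mathbf v$ is linear, i.e.\ each $x_i$ occurs exactly once, so that $\mathbf v$ is a rearrangement of $\mathbf u$ and the identity is permutational. This is the heart of the matter and the step I expect to be the real obstacle: a repeated letter in $\mathbf v$ ought to force $\mathbf u\approx0$, but its two occurrences may be widely separated, so the elementary pumping used above, which succeeds at once when a repeat can be localized into a contiguous power $z^N$, does not apply verbatim. Controlling an arbitrary nonlinear $\mathbf v$ is precisely the combinatorial analysis of identities of nil-varieties carried out in~\cite[Lemma~1]{Sapir-Sukhanov-81}, from which this last implication follows; I would invoke that lemma here to complete the dichotomy.
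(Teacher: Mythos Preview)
Your proposal is correct and aligns with the paper's treatment: the paper does not supply a proof at all, merely remarking that part~(i) ``can be easily deduced from~\cite[Lemma~1]{Sapir-Sukhanov-81}'' and that part~(ii) ``is a simple observation,'' which is exactly where you land after filling in the elementary steps. Your explicit argument for~(ii) and the content-equality reduction in~(i) are fine; the only substantive input, the linearity of $\mathbf v$, you draw from the same Sapir--Sukhanov lemma the paper invokes.
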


The following lemma is well known and can be easily checked (see~\cite{Shevrin-94,Shevrin-05}, for instance).

\begin{lemma}
\label{x^omega}
If $S$ is an arbitrary epigroup and $x\in S$ then $x\,\overline x\,=x^\omega$.\qed
\end{lemma}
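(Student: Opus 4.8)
The plan is to unwind the definitions of $x^\omega$ and $\overline x$ directly and then carry out a short computation inside the maximal subgroup $G_x$. Recall from the definitions that $x^\omega$ is the identity element of the group $G_x$, that $xx^\omega=x^\omega x\in G_x$ (this is the already-cited fact, see~\cite{Shevrin-94,Shevrin-05}), and that $\overline x$ is by definition the element inverse to $xx^\omega$ in $G_x$. In particular $\overline x\in G_x$, so $x^\omega$ acts as a two-sided identity on $\overline x$; that is, $\overline x=x^\omega\,\overline x$.

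First I would rewrite the product $x\,\overline x$ by inserting this last equality and using associativity:
\[
x\,\overline x=x\,(x^\omega\,\overline x)=(xx^\omega)\,\overline x.
\]
Now both factors $xx^\omega$ and $\overline x$ lie in $G_x$, and by the very definition of $\overline x$ the element $\overline x$ is the inverse of $xx^\omega$ in this group. Hence their product equals the identity of $G_x$, namely $(xx^\omega)\,\overline x=x^\omega$. Combining the two computations yields $x\,\overline x=x^\omega$, as required.

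There is essentially no obstacle here beyond keeping careful track of which products are being formed inside $G_x$ and invoking the basic well-definedness of $x^\omega$ together with $xx^\omega=x^\omega x\in G_x$; once $\overline x$ is replaced by $x^\omega\,\overline x$, the whole argument reduces to the single line above.
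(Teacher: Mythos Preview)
Your argument is correct: once you use that $\overline x\in G_x$ so that $\overline x=x^\omega\,\overline x$, the identity $x\,\overline x=(xx^\omega)\,\overline x=x^\omega$ follows immediately from the definition of $\overline x$ as the group inverse of $xx^\omega$ in $G_x$. The paper itself does not give a proof of this lemma at all; it simply records it as a well-known and easily checked fact with a reference to~\cite{Shevrin-94,Shevrin-05}, so your write-up supplies exactly the routine verification the authors chose to omit.
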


The following assertion can be easily checked.

\begin{lemma}
\label{x*=0}
The identity $\overline x\,\approx 0$ holds in an epigroup $S$ if and only if $S$ is a nil-semigroup.\qed
\end{lemma}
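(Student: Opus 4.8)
The plan is to reduce both implications to a single structural fact: in an epigroup possessing a zero element~$0$, the condition $\overline x=0$ is equivalent to $x^\omega=0$, and the latter forces the maximal subgroup $G_x$ (the subgroup in which some power of~$x$ lies) to collapse to the trivial group $\{0\}$. The bridge in both directions will be Lemma~\ref{x^omega}, which gives $x\,\overline x\,=x^\omega$. Throughout I would invoke the convention introduced in the excerpt, so that the assertion ``the identity $\overline x\,\approx 0$ holds in~$S$'' already carries with it the existence of a zero element $0\in S$ together with $\overline x=0$ for every $x\in S$.

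For the ``only if'' direction I would assume $\overline x\,\approx 0$ and fix $x\in S$. By Lemma~\ref{x^omega}, $x^\omega=x\,\overline x\,=x\cdot 0=0$, the last equality using that $0$ is a zero of~$S$. Since $x^\omega$ is by definition the identity element of $G_x$ and $0$ is absorbing, every $g\in G_x$ satisfies $g=g\,x^\omega=g\cdot 0=0$, so $G_x=\{0\}$. As some power of~$x$ lies in $G_x$, that power equals~$0$, i.e. $x$ is nilpotent. Since $x$ was arbitrary and $S$ has a zero, $S$ is a nil-semigroup.

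For the ``if'' direction I would start from a nil-semigroup~$S$. Its only idempotent is~$0$ (if $e=e^2$ and $e^n=0$, then $e=e^n=0$), so the only subgroup of~$S$ is the one-element group $\{0\}$. Consequently $G_x=\{0\}$ and $x^\omega=0$ for every~$x$. Hence $x\,x^\omega=x\cdot 0=0$, and $\overline x$, being the inverse of this element inside $\{0\}$, must equal~$0$; thus $\overline x\,\approx 0$ holds in~$S$. (Alternatively one may reapply Lemma~\ref{x^omega}: $\overline x\in G_x=\{0\}$, whence $\overline x=0$ at once.) Note also that this confirms that a nil-semigroup is genuinely an epigroup, so the statement is not vacuous.

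I do not anticipate a real obstacle: the argument is short and amounts to bookkeeping about the pseudoinverse. The single point that deserves care is the passage from $x^\omega=0$ to $G_x=\{0\}$, which relies crucially on $0$ being a two-sided zero of the \emph{whole} semigroup rather than merely the identity of $G_x$; this is exactly what shrinks the subgroup to a single point. Everything else follows directly from the definitions of $x^\omega$ and of the pseudoinverse together with Lemma~\ref{x^omega}.
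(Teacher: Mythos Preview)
Your proof is correct. The paper does not actually give a proof of this lemma: it is stated with an immediate \qed, preceded only by the remark ``The following assertion can be easily checked.'' Your argument---using Lemma~\ref{x^omega} to pass between $\overline x$ and $x^\omega$, and the observation that the sole idempotent of a nil-semigroup is~$0$ so that every $G_x$ collapses to $\{0\}$---is exactly the kind of routine verification the authors leave to the reader, and there is no circularity since Lemma~\ref{x^omega} precedes the present lemma.
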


\begin{lemma}[\!\!{\mdseries{\cite[Lemma~2.4]{Gusev-Vernikov-16}}}]
\label{unary identity}
If $\mathbf w$ is a non-semigroup word and $\con(\mathbf w)=\{x\}$ then an arbitrary epigroup variety satisfies the identity $\mathbf w\approx x^p\,\overline x\,^q$ for some integer $p\ge 0$ and some natural $q$.\qed
\end{lemma}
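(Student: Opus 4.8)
The plan is to reduce the statement to the monogenic case and then to produce the normal form explicitly. Since $\mathbf w$ involves only the letter $x$, it is enough to show that the identity $\mathbf w\approx x^p\overline x^{\,q}$ holds in every epigroup $S$: one fixes $S$, evaluates $x$ at an arbitrary element, and works inside the (monogenic) subalgebra it generates. First I would assemble the one-variable identities valid in every epigroup. By Lemma~\ref{x^omega} and the definition of the pseudoinverse, $x\overline x=\overline x\,x=x^\omega$, the idempotent $x^\omega$ commutes with $x$ and with $\overline x$ and satisfies $x^\omega\overline x=\overline x$, and $\overline x$ lies in the maximal subgroup $G_x$ whose identity element is $x^\omega$; consequently the monogenic epigroup generated by $x$ is commutative. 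Pairing factors by means of $x\overline x=x^\omega$ and using the centrality of $x^\omega$ then gives the reduction identities
\[
x^a\overline x^{\,b}=
\begin{cases}
\overline x^{\,b-a}&\text{if }a<b,\\
x^\omega&\text{if }a=b,\\
x^{a-b}x^\omega&\text{if }a>b,
\end{cases}
\]
valid in every epigroup for all $a,b\ge0$.

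Next I would compute the pseudoinverse of a word already brought to the shape $x^a\overline x^{\,b}$. Because $\overline x$ is by definition the inverse of $xx^\omega$ in the group $G_x$, the pseudoinverse of any element of $G_x$ coincides with its inverse in $G_x$; in particular $\overline{\overline x}=xx^\omega=x^2\overline x$ and $\overline{x^a}=\overline x^{\,a}$. Applying the reduction identities to push $x^a\overline x^{\,b}$ into $G_x$ and then inverting inside this group, one obtains in every case a formula $\overline{x^a\overline x^{\,b}}=x^{p'}\overline x^{\,q'}$, with $q'\ge1$ whenever $b\ge1$.

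With these identities in hand the result follows by a straightforward induction on the number of occurrences of the unary operation in $\mathbf w$. The statement I would actually induct on is that every word $\mathbf u$ with $\con(\mathbf u)=\{x\}$ equals in $S$ some $x^a\overline x^{\,b}$ with $a,b\ge0$, where $b\ge1$ exactly when $\mathbf u$ is a non-semigroup word. The base case $\mathbf u=x$ is $x^1\overline x^{\,0}$. For a product I would multiply two normal forms and reduce $x^{a_1+a_2}\overline x^{\,b_1+b_2}$ to canonical shape (rewriting $x^\omega$ as $x\overline x$ so that a factor $\overline x$ is kept whenever the total number of unary operations is positive); for an outer pseudoinversion I would invoke the formula from the previous paragraph. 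Specializing the induction hypothesis to $\mathbf w$, which is non-semigroup by assumption, yields $x^p\overline x^{\,q}$ with $p\ge0$ and $q\ge1$, as required.

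The bookkeeping in the induction is routine; the point that requires care is the justification that the relations listed above --- above all the commutativity of the monogenic epigroup and the identities $\overline{\overline x}=x^2\overline x$ and $\overline{x^a\overline x^{\,b}}=x^{p'}\overline x^{\,q'}$ --- are genuine identities of every epigroup rather than features of a single model. This rests solely on the definition of pseudoinversion through the maximal subgroup $G_x$ (Lemma~\ref{x^omega}) together with the observation that every iterated pseudoinverse of $x$ stays inside $G_x$, so that all the operations in question are ultimately performed in one abelian group with identity $x^\omega$.
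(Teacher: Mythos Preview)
The paper does not give a proof of this lemma; it is cited from \cite[Lemma~2.4]{Gusev-Vernikov-16} and marked with \qed. Your argument is correct and is essentially the standard one: the identities $x\overline x=\overline x\,x=x^\omega$, $(x^a)^\omega=x^\omega$, and $\overline{x^a}=\overline x^{\,a}$ hold in every epigroup, and once $b\ge1$ the element $x^a\overline x^{\,b}$ lies in the cyclic subgroup of $G_x$ generated by $xx^\omega$, so its pseudoinverse is again of the same shape with positive $\overline x$-exponent. The structural induction on $\mathbf w$ then yields a pair $(p,q)$ that depends only on $\mathbf w$, not on the epigroup, with $q\ge1$ because $\mathbf w$ contains at least one bar.

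Two small remarks. First, the clause ``consequently the monogenic epigroup generated by $x$ is commutative'' is really a \emph{consequence} of the induction (every element being $x^p\overline x^{\,q}$) rather than a premise; what you actually use in the inductive step is only $x\overline x=\overline x\,x$, which you have already established directly. Second, the reduction identities and the rewriting of $x^\omega$ as $x\overline x$ are superfluous for this lemma: after commuting factors, the product step already gives $x^{a_1+a_2}\overline x^{\,b_1+b_2}$, which is the required normal form with no need to minimise exponents.
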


\section{The proof of Theorems \ref{main semigroup} and \ref{main epigroup}: the ``only if'' part}
\label{only if part}

Here we need several auxiliary facts. A word $\mathbf u$ is called \emph{linear} if every letter occurs in $\mathbf u$ at most once. If $\mathbf w$ is a word, $\con(\mathbf w)=\{x_1,x_2,\dots,x_n\}$ and $\xi\in S_n$ then we denote by $\xi[\mathbf w]$ the word obtained from $\mathbf w$ by the substitution $x_i\mapsto x_{i\xi}$ for all $i=1,2,\dots,n$. If $\mathbf V$ is a variety of semigroups or epigroups and $n$ a natural number then we denote by $\Perm_n(\mathbf V)$ the set of all permutations $\pi\in S_n$ such that $\mathbf V$ satisfies the identity $p_n[\pi]$. Clearly, $\Perm_n(\mathbf V)$ is a subgroup of $S_n$.

\begin{proposition}
\label{cancellable from V to Perm_n(V)}
If a variety of semigroups \textup[epigroups\textup] $\mathbf V$ is a cancellable element of the lattice $\mathbb{SEM}$ \textup[respectively $\mathbb{EPI}$\textup] and $n$ is a positive integer then the group $\Perm_n(\mathbf V)$ is a cancellable element of the lattice $\Sub(S_n)$.
\end{proposition}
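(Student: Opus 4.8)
The plan is to realise $\Perm_n$ as one half of an antitone Galois connection between the variety lattice and $\Sub(S_n)$ and to push the cancellation property of $\mathbf V$ across it. Write $G=\Perm_n(\mathbf V)$ and, for a subgroup $F\le S_n$, set $\mathbf{P}(F)=\var\{p_n[\pi]\mid\pi\in F\}$. First I would record the elementary facts that make $\mathbf P$ and $\Perm_n$ adjoint: $\mathbf U\subseteq\mathbf P(F)$ iff $F\subseteq\Perm_n(\mathbf U)$; the permutations satisfied by any single semigroup form a subgroup of $S_n$, whence $\Perm_n(\mathbf A\vee\mathbf B)=\Perm_n(\mathbf A)\cap\Perm_n(\mathbf B)$ and, for every variety $\mathbf X$, $\mathbf X\wedge\mathbf P(F)=\mathbf X\cap\mathbf P\big(\langle\Perm_n(\mathbf X),F\rangle\big)$; and consequently $\Perm_n(\mathbf P(F))=F$. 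The engine of the argument is a content lemma: since every identity of a permutational variety is balanced (each letter occurs equally often on both sides), any variety $\mathbf Z$ all of whose identities are balanced determines, on the $n!$ multilinear words obtained from $x_1x_2\cdots x_n$ by permuting letters, exactly the partition into right cosets of $\Perm_n(\mathbf Z)$, and balancedness guarantees that no word of different content is ever drawn into this class. Applying this to a meet (which corresponds to the join of the two defining fully invariant congruences, i.e.\ to the join of the two coset partitions) yields the rule I shall need: if $\Perm_n(\mathbf Z)\subseteq F$ and $\mathbf Z$ satisfies only balanced identities, then $\Perm_n(\mathbf P(F)\wedge\mathbf Z)=F$.

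Now suppose $G\vee H=G\vee K$ and $G\wedge H=G\wedge K$ in $\Sub(S_n)$; I must deduce $H=K$. From $\langle G,H\rangle=\langle G,K\rangle$ and the meet rule above, $\mathbf V\wedge\mathbf P(H)=\mathbf V\cap\mathbf P(\langle G,H\rangle)=\mathbf V\cap\mathbf P(\langle G,K\rangle)=\mathbf V\wedge\mathbf P(K)$, and this clean equality is all the first hypothesis contributes. To manufacture a cancellation instance for $\mathbf V$ I would form the symmetric pair
$$\mathbf U=\mathbf P(H)\wedge(\mathbf V\vee\mathbf P(K)),\qquad \mathbf W=\mathbf P(K)\wedge(\mathbf V\vee\mathbf P(H)).$$
Since $\mathbf V\vee\mathbf P(K)$ has its identities among those of $\mathbf P(K)$, it satisfies only balanced identities, and $\Perm_n(\mathbf V\vee\mathbf P(K))=G\cap K=G\cap H\subseteq H$ by the second hypothesis; the content lemma then gives $\Perm_n(\mathbf U)=H$, and symmetrically $\Perm_n(\mathbf W)=K$. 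Thus $\mathbf U=\mathbf W$ would force $H=\Perm_n(\mathbf U)=\Perm_n(\mathbf W)=K$, exactly the conclusion sought. Moreover $\mathbf V\wedge\mathbf U=\mathbf V\wedge\mathbf P(H)$ and $\mathbf V\wedge\mathbf W=\mathbf V\wedge\mathbf P(K)$ (the factor $\mathbf V\vee\mathbf P(\cdot)$ lies above $\mathbf V$ and is absorbed), so the meets of $\mathbf U$ and $\mathbf W$ with $\mathbf V$ already coincide.

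It remains to verify the join condition $\mathbf V\vee\mathbf U=\mathbf V\vee\mathbf W$, and this is where the main difficulty lies. Both joins are trapped below the symmetric variety $(\mathbf V\vee\mathbf P(H))\wedge(\mathbf V\vee\mathbf P(K))$, so it suffices to prove the Dedekind-type equality $\mathbf V\vee\big(\mathbf P(H)\wedge(\mathbf V\vee\mathbf P(K))\big)=(\mathbf V\vee\mathbf P(H))\wedge(\mathbf V\vee\mathbf P(K))$ and its mirror image. Passing to the dually reflected lattice of fully invariant congruences, this reduces to the modular law applied at the congruences of the permutational varieties $\mathbf P(H)$ and $\mathbf P(K)$; equivalently, the crux is the modularity of the permutational varieties as elements of $\mathbb{SEM}$, which I would extract from their balanced structure rather than from abstract lattice theory (a bare modular element of a lattice need \emph{not} satisfy this identity, as the pentagon shows). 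This is the step I expect to be the hardest and most technical. Once the two joins are shown equal, cancellability of $\mathbf V$ yields $\mathbf U=\mathbf W$ and hence $H=K$, so $G=\Perm_n(\mathbf V)$ is cancellable in $\Sub(S_n)$.

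For the epigroup statement the same scheme applies: the identities $p_n[\pi]$ are ordinary semigroup identities, so $\Perm_n$, the maps $\mathbf P(F)$, and the content lemma are unchanged; the only extra point is that adjoining pseudoinversion must not enlarge $\Perm_n(\mathbf Z)$ on multilinear words, and here Lemma~\ref{unary identity}, rewriting each one-variable non-semigroup word as $x^p\,\overline{x}^{\,q}$, keeps the content bookkeeping intact. I would therefore phrase the whole argument once so that it reads identically in $\mathbb{SEM}$ and $\mathbb{EPI}$.
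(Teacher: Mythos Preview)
Your Galois-connection setup and the ``content lemma'' are fine, and the meet computation $\mathbf V\wedge\mathbf U=\mathbf V\wedge\mathbf W$ goes through as you say. The genuine gap is the join step. The equality
\[
\mathbf V\vee\bigl(\mathbf P(H)\wedge(\mathbf V\vee\mathbf P(K))\bigr)=(\mathbf V\vee\mathbf P(H))\wedge(\mathbf V\vee\mathbf P(K))
\]
is exactly the modular law applied at $\mathbf P(H)$ (take $y=\mathbf V\le z=\mathbf V\vee\mathbf P(K)$), not at $\mathbf V$; modularity of $\mathbf V$ alone does not yield it. But for any non-trivial proper $H\le S_n$ the variety $\mathbf P(H)$ is overcommutative and proper, hence by Proposition~\ref{mod nec} it is \emph{not} a modular element of $\mathbb{SEM}$. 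So the very property you name as ``the crux'' is false, and you have given no alternative argument for this particular instance. Unless you can prove that specific Dedekind-type identity directly from the balanced structure (and you have not sketched how), the proof does not close.

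There is a second, fatal problem on the epigroup side. The class of all epigroups satisfying $\{p_n[\pi]\mid\pi\in F\}$ is not closed under infinite direct products (already the free commutative semigroup of countable rank is not an epigroup), so $\mathbf P(F)$ is not an element of $\mathbb{EPI}$ at all. Your construction of $\mathbf U,\mathbf W$ therefore does not live in the lattice where you need to cancel.

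The paper avoids both obstacles by a different route: it first uses Proposition~\ref{mod nec} to reduce to the case where $\mathbf V$ is a nil-variety (so $\Perm_n(\mathbf{SEM})$ and $\Perm_n(\mathbf{SL}\vee\mathbf N)$ are handled trivially), and then, instead of the overcommutative $\mathbf P(X_i)$, it builds \emph{nil}-varieties
\[
\mathbf X_i=\var\bigl\{x_1\cdots x_{n+1}\approx 0,\ \mathbf w\approx 0\ (\ell(\mathbf w)=n,\ |\con(\mathbf w)|<n),\ p_n[\pi]\ (\pi\in X_i)\bigr\}.
\]
These are periodic, hence live in both $\mathbb{SEM}$ and $\mathbb{EPI}$, and their identity basis is so constrained that one can verify $\mathbf V\vee\mathbf X_1=\mathbf V\vee\mathbf X_2$ and $\mathbf V\wedge\mathbf X_1=\mathbf V\wedge\mathbf X_2$ by direct deduction analysis, with no appeal to any modular-law identity in the ambient lattice. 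If you want to salvage your approach, you would need either a direct identity-theoretic proof of the Dedekind equality above for the specific $\mathbf V$ at hand, or to replace $\mathbf P(F)$ by periodic test varieties as the paper does.
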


\begin{proof}
Suppose at first that $\mathbf V$ is a semigroup variety. Clearly, the variety $\mathbf V$ is a modular element of $\mathbb{SEM}$. By Proposition~\ref{mod nec}, either $\mathbf{V=SEM}$ or $\mathbf{V=M\vee N}$ where $\mathbf M$ is one of the varieties $\mathbf T$ or $\mathbf{SL}$, while $\mathbf N$ is a nil-variety. It is evident that $\Perm_n(\mathbf{SEM})=T$ and $\Perm_n(\mathbf{SL\vee N})=\Perm_n(\mathbf N)$ for any $n$. Since $T$ is a cancellable element of $\Sub(S_n)$, we can assume that $\mathbf V=\mathbf N$. 

Put $V=\Perm_n(\mathbf V)$ for brevity. Suppose that $V$ is not a cancellable element of the lattice $\Sub(S_n)$. Then there are subgroups $X_1$ and $X_2$ of the group $S_n$ such that $V\vee X_1=V\vee X_2$ and $V\wedge X_1=V\wedge X_2$ but $X_1\ne X_2$. For $i=1,2$, we denote by $\mathbf X_i$ the variety given by the identity $x_1x_2\cdots x_{n+1}\approx 0$, all identities of the form $\mathbf w\approx 0$ where $\mathbf w$ is a word of length $n$ depending on $<n$ letters and all identities of the form $p_n[\pi]$ where $\pi\in X_i$. It is clear that $\mathbf X_1\ne\mathbf X_2$. 

It is easy to see that if $L$ is a lattice, $x,y,z\in L$, $x\vee y=x\vee z$, $x\wedge y=x\wedge z$ and one of the elements $x$, $y$ or $z$ is the largest element of $L$ then $y=z$. On the other hand, if one of the varieties $\mathbf V$, $\mathbf X_1$ or $\mathbf X_2$ satisfies the identity~\eqref{nilpotence} then one of the groups $V$, $X_1$ or $X_2$ coincides with $S_n$. Hence the identity~\eqref{nilpotence} fails in the varieties $\mathbf V$, $\mathbf X_1$ and $\mathbf X_2$.

Let now $\mathbf{u\approx v}$ be an arbitrary identity that holds in $\mathbf{V\vee X}_1$. We are going to verify that this identity holds in $\mathbf{V\vee X}_2$. Since $\mathbf{u\approx v}$ holds in $\mathbf V$, it suffices to verify that it holds in $\mathbf X_2$. The identity $\mathbf{u\approx v}$ holds in $\mathbf V$ and $\mathbf X_1$. If $\ell(\mathbf u),\ell(\mathbf v)\ge n+1$ then $\mathbf X_2$ satisfies $\mathbf u\approx 0$ and $\mathbf v\approx 0$, and thus $\mathbf{u\approx v}$. Thus, we can assume without loss of generality that $\ell(\mathbf u)\le n$. On the other hand, the definition of the variety $\mathbf X_1$ and the fact that $\mathbf{u\approx v}$ holds in $\mathbf X_1$ imply that $\ell(\mathbf u),\ell(\mathbf v)\ge n$. Therefore, $\ell(\mathbf u)=n$.

Suppose that the word $\mathbf u$ is linear. By Lemma~\ref{split}(i), either the identity $\mathbf{u\approx v}$ is permutational or the variety $\mathbf{V\vee X}_1$ satisfies the identity $\mathbf u\approx 0$. However, we have proved above that the second case is impossible. Therefore, $\mathbf{u\approx v}$ is an identity of the form $p_n[\pi]$. Since it holds both in $\mathbf V$ and $\mathbf X_1$, we have $\pi\in V\wedge X_1=V\wedge X_2$. Hence $\pi\in X_2$, and therefore the identity $\mathbf{u\approx v}$ holds in $\mathbf X_2$. 

It remains to consider the case when the word $\mathbf u$ is non-linear. Then it depends on $<n$ letters. Therefore, $\mathbf u\approx 0$ holds in the varieties $\mathbf X_1$ and $\mathbf X_2$. If $\ell(\mathbf v)>n$ or $\mathbf v$ is a word of length $n$ depending on $<n$ letters then $\mathbf v\approx 0$ holds in $\mathbf X_2$, whence $\mathbf{u\approx v}$ holds in $\mathbf X_2$. Finally, if $\ell(\mathbf v)=n$ and $\mathbf v$ depends on $n$ letters then the word $\mathbf v$ is linear and we can complete our considerations by the same arguments as in the previous paragraph.

Thus, if the identity $\mathbf{u\approx v}$ holds in $\mathbf{V\vee X}_1$ then it holds in $\mathbf{V\vee X}_2$ too. This means that $\mathbf{V\vee X}_2\subseteq\mathbf{V\vee X}_1$. The reverse inclusion can be verified analogously, whence $\mathbf{V\vee X}_1=\mathbf{V\vee X}_2$.

Let now $\mathbf{u\approx v}$ be an arbitrary identity that holds in the variety $\mathbf{V\wedge X}_1$. We aim to verify that it holds in $\mathbf{V\wedge X}_2$. Let the sequence of words
\begin{equation}
\label{deduction}
\mathbf{u=u}_0,\mathbf u_1,\dots,\mathbf u_k=\mathbf v
\end{equation}
be a deduction of the identity $\mathbf{u\approx v}$ from the identities of the varieties $\mathbf V$ and $\mathbf X_1$. This means that, for any $i=0,1,\dots,k-1$, the identity $\mathbf u_i\approx\mathbf u_{i+1}$ holds in one of the varieties $\mathbf V$ or $\mathbf X_1$. 

Suppose that there is an index $i$ such that $\mathbf u_i$ is a linear word of length $n$. If $i>0$ then Lemma~\ref{split}(i) implies that either $\mathbf u_{i-1}$ is a linear word and $\con(\mathbf u_{i-1})=\con(\mathbf u_i)$ or one of the varieties $\mathbf V$ or $\mathbf X_1$ satisfies the identity~\eqref{nilpotence}. Analogously, if $i<k$ then either $\mathbf u_{i+1}$ is a linear word and $\con(\mathbf u_i)=\con(\mathbf u_{i+1})$ or one of the varieties $\mathbf V$ and $\mathbf X_1$ satisfies the identity~\eqref{nilpotence}. As we have seen above, the latter is impossible. Therefore, the words adjacent to $\mathbf u_i$ in the sequence~\eqref{deduction} are linear words of length $n$ depending on the same letters as $\mathbf u_i$. By induction, this means that all the words $\mathbf u_0$, $\mathbf u_1$, \dots, $\mathbf u_k$ are linear words of length $n$ depending on the same letters. We can assume without loss of generality that $\con(\mathbf u)=\{x_1,x_2,\dots,x_n\}$. There are permutations $\pi_0,\pi_1,\dots,\pi_{k-1}\in S_n$ such that $\mathbf u_i=\pi_i[\mathbf u_{i-1}]$ for each $i=1,2,\dots,k$. Clearly, $\pi_i\in V$ [respectively $\pi_i\in X_1$] whenever the identity $\mathbf u_{i-1}\approx\mathbf u_i$ holds in the variety $\mathbf V$ [respectively $\mathbf X_1$]. Put $\pi=\pi_0\pi_1\cdots\pi_{k-1}$. Then $\pi\in V\vee X_1=V\vee X_2$. Therefore, there are permutations $\sigma_0,\sigma_1,\dots,\sigma_{m-1}\in S_n$ such that $\pi=\sigma_0\sigma_1\cdots\sigma_{m-1}$ and, for all $i=0,1,\dots,m-1$, the permutation $\sigma_i$ lies in either $V$ or $X_2$. Put $\mathbf v_0=\mathbf u$ and $\mathbf v_i=\sigma_i[\mathbf v_{i-1}]$ for each $i=1,2,\dots,m$. Obviously, $\mathbf v_m=\mathbf v$ and, for any $i=0,1,\dots,m-1$, the identity $\mathbf v_i\approx\mathbf v_{i+1}$ holds in one of the varieties $\mathbf V$ or $\mathbf X_2$. Thus, the sequence of words
$$
\mathbf{u=v}_0,\mathbf v_1,\dots,\mathbf v_m=\mathbf v
$$
is a deduction of the identity $\mathbf{u\approx v}$ from the identities of the varieties $\mathbf V$ and $\mathbf X_2$. Therefore, the identity $\mathbf{u\approx v}$ holds in the variety $\mathbf{V\wedge X}_2$.

Suppose now that there are no linear words of length $n$ among the words $\mathbf u_0$, $\mathbf u_1$, \dots, $\mathbf u_k$. The definition of the variety $\mathbf X_1$ shows that if this variety satisfies a non-trivial identity $\mathbf{p\approx q}$ then $\ell(\mathbf p),\ell(\mathbf q)\ge n$. This means that:
\begin{itemize}
\item[$\bullet$] $\ell(\mathbf u_i)\ge n$ for all $i=1,2,\dots,k-1$;
\item[$\bullet$] either $\ell(\mathbf u_0)\ge n$ or the identity $\mathbf u_0\approx\mathbf u_1$ holds in $\mathbf V$;
\item[$\bullet$] either $\ell(\mathbf u_k)\ge n$ or the identity $\mathbf u_{k-1}\approx\mathbf u_k$ holds in $\mathbf V$.
\end{itemize}
Let $1\le i\le k-1$. In view of what we said above, either $\ell(\mathbf u_i)>n$ or $\ell(\mathbf u_i)=n$ and $\mathbf u_i$ depends on $<n$ letters. In both cases the identity $\mathbf u_i\approx 0$ holds in $\mathbf X_2$. Hence $\mathbf X_2$ satisfies the identities $\mathbf u_1\approx 0\approx\mathbf u_{k-1}$. Moreover, if $\mathbf u_0\approx\mathbf u_1$ or $\mathbf u_{k-1}\approx\mathbf u_k$ does not hold in $\mathbf V$ then it holds in $\mathbf X_2$. Therefore, the sequence of words $\mathbf{u=u}_0,\mathbf u_1,\mathbf u_{k-1},\mathbf u_k=\mathbf v$ is a deduction of the identity $\mathbf{u\approx v}$ from the identities of the varieties $\mathbf V$ and $\mathbf X_2$. Thus, we have again that the identity $\mathbf{u\approx v}$ holds in the variety $\mathbf{V\wedge X}_2$.

We have proved that if the identity $\mathbf{u\approx v}$ holds in $\mathbf{V\wedge X}_1$ then it holds in $\mathbf{V\wedge X}_2$. Therefore, $\mathbf{V\wedge X}_2\subseteq\mathbf{V\wedge X}_1$. The reverse inclusion can be verified analogously, whence $\mathbf{V\wedge X}_1=\mathbf{V\wedge X}_2$.

Thus, $\mathbf{V\vee X}_1=\mathbf{V\vee X}_2$ and $\mathbf{V\wedge X}_1=\mathbf{V\wedge X}_2$. Since $\mathbf V$ is a cancellable element of $\mathbb{SEM}$, this implies that $\mathbf X_1=\mathbf X_2$. However, this contradicts the choice of the groups $X_1$ and $X_2$.

We obtain the desirable conclusion in the semigroup case. Suppose now that $\mathbf V$ is an epigroup variety. By Proposition~\ref{mod nec}, $\mathbf{V=M\vee N}$ where $\mathbf M$ is one of the varieties $\mathbf T$ or $\mathbf{SL}$, while $\mathbf N$ is a nil-variety. As in the semigroup case, we can assume that $\mathbf V=\mathbf N$. Note that the varieties $\mathbf X_1$ and $\mathbf X_2$ considered above in this proof are periodic. Therefore, they can be considered as epigroup varieties. Now we can complete considerations in the epigroup case by literally repeating arguments given above in the semigroup one.
\end{proof}

Proposition~\ref{cancellable from V to Perm_n(V)} and Lemma~\ref{subgroups cancellable} immediately imply the following 

\begin{proposition}
\label{alternative for cancellable}
Let a variety of semigroups \textup[epigroups\textup] $\mathbf V$ be a cancellable element of the lattice $\mathbb{SEM}$ \textup[respectively $\mathbb{EPI}$\textup] and $n$ be a natural number. If $\mathbf V$ satisfies some permutational identity of length $n$ then it satisfies all such identities.\qed
\end{proposition}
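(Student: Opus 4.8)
The plan is to translate the statement entirely into the language of the subgroup lattice $\Sub(S_n)$ and then invoke the two results just established. First I would record the two key reformulations in terms of $\Perm_n(\mathbf V)$. By definition, $\mathbf V$ satisfies the identity $p_n[\pi]$ exactly when $\pi\in\Perm_n(\mathbf V)$; since $p_n[\pi]$ is \emph{permutational} precisely when $\pi$ is non-trivial, the hypothesis ``$\mathbf V$ satisfies some permutational identity of length $n$'' says exactly that $\Perm_n(\mathbf V)$ contains a non-identity permutation, i.e.\ $\Perm_n(\mathbf V)\ne T$. Likewise, the conclusion ``$\mathbf V$ satisfies all such identities'' says that every $\pi\in S_n$ lies in $\Perm_n(\mathbf V)$, i.e.\ $\Perm_n(\mathbf V)=S_n$.

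With these reformulations in hand, the argument is immediate. By Proposition~\ref{cancellable from V to Perm_n(V)}, the cancellability of $\mathbf V$ in $\mathbb{SEM}$ [respectively $\mathbb{EPI}$] forces $\Perm_n(\mathbf V)$ to be a cancellable element of the lattice $\Sub(S_n)$. Lemma~\ref{subgroups cancellable} then pins down the only two possibilities, namely $\Perm_n(\mathbf V)=T$ or $\Perm_n(\mathbf V)=S_n$. The hypothesis rules out the first option, so $\Perm_n(\mathbf V)=S_n$, which is exactly the desired conclusion.

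I expect no genuine obstacle at this stage: the present statement is a pure corollary, and all the real work has already been done upstream. The substantive difficulties lie in the two inputs --- Proposition~\ref{cancellable from V to Perm_n(V)}, where one must transfer cancellability from the (very complicated) varietal lattice down to the finite lattice $\Sub(S_n)$ by carefully building witness varieties $\mathbf X_1,\mathbf X_2$ and analyzing deductions of identities, and Lemma~\ref{subgroups cancellable}, where the classification of cancellable subgroups rests on the description of modular elements of $\Sub(S_n)$ together with an explicit search for complements. Once those are available, the only thing left to verify is the dictionary between ``satisfying a permutational identity of length $n$'' and ``membership in $\Perm_n(\mathbf V)$'', which is immediate from the definitions.
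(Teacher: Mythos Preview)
Your proposal is correct and matches the paper's approach exactly: the paper states this proposition as an immediate consequence of Proposition~\ref{cancellable from V to Perm_n(V)} and Lemma~\ref{subgroups cancellable} (hence the \qed\ with no further argument), and your write-up simply unpacks that implication via the dictionary between permutational identities of length $n$ and elements of $\Perm_n(\mathbf V)$.
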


Let $F$ be the free semigroup over a countably infinite alphabet and $F^1$ be the semigroup $F$ with the new identity element adjoined. We treat this identity element as the empty word. We denote by $\Aut(F)$ and $\End(F)$ the group of automorphisms and the monoid of endomorphisms on $F$ respectively. Let $\mathbf u,\mathbf v\in F$. We write $\mathbf{u\le v}$ if $\mathbf{v=a}\xi(\mathbf u)\mathbf b$ for some $\xi\in\End(F)$ and some $\mathbf a,\mathbf b\in F^1$. If $\mathbf{u\le v}$ and $\mathbf{u\ne v}$ then we write $\mathbf{u<v}$. We say that words $\mathbf u$ and $\mathbf v$ are \emph{incomparable} if $\mathbf{u\not\le v}$ and $\mathbf{v\not\le u}$. We will say that words $\mathbf p$ and $\mathbf q$ are \emph{equivalent} if $\mathbf{p\le q}$ and $\mathbf{q\le p}$. It is clear that the words $\mathbf p$ and $\mathbf q$ are equivalent if and only if $\mathbf q=\varphi(\mathbf p)$ for some $\varphi\in\Aut(F)$.

\begin{lemma}
\label{if u=v=0 then w=0}
Suppose that $\mathbf u$, $\mathbf v$ and $\mathbf w$ are pairwise incomparable words and $\con(\mathbf u)=\con(\mathbf v)=\con(\mathbf w)$. If a semigroup variety $\mathbf V$ is a cancellable element of the lattice $\mathbb{SEM}$ and $\mathbf V$ satisfies the identities $\mathbf u\approx 0$ and $\mathbf v\approx 0$ then it satisfies the identity $\mathbf w\approx 0$ too.{\sloppy

}
\end{lemma}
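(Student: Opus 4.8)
The plan is to argue by contradiction, playing the cancellability of $\mathbf V$ against a carefully chosen pair of varieties. First I would record two reductions. Since $\mathbf V$ is cancellable it is modular, and since $\mathbf V\models\mathbf u\approx 0$ is non-trivial, $\mathbf V$ is proper; Proposition~\ref{mod nec} then gives $\mathbf{V=M\vee N}$ with $\mathbf M\in\{\mathbf T,\mathbf{SL}\}$ and $\mathbf N$ a nil-variety. The case $\mathbf M=\mathbf{SL}$ is excluded because $\mathbf{SL}\not\models\mathbf u\approx 0$ (Lemma~\ref{identities in SL}), so $\mathbf V=\mathbf N$ is a nil-variety. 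In particular $\mathbf V\models\mathbf u\approx 0\approx\mathbf v$, whence the elementary but decisive fact that $\mathbf V\models\mathbf u\approx\mathbf v$. It is also useful to note, via Lemma~\ref{identities in SL} and $\con\mathbf u=\con\mathbf w$, that the desired conclusion $\mathbf V\models\mathbf w\approx 0$ is equivalent to $\mathbf{V\vee SL}\models\mathbf u\approx\mathbf w$; this reformulation places the statement inside the content-preserving world where Corollary~\ref{join with SL} is available.

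Now suppose, towards a contradiction, that $\mathbf V\not\models\mathbf w\approx 0$. I aim to produce two \emph{distinct} varieties $\mathbf Y\ne\mathbf Z$ with $\mathbf{V\vee Y=V\vee Z}$ and $\mathbf{V\wedge Y=V\wedge Z}$, so that cancellability of $\mathbf V$ forces $\mathbf Y=\mathbf Z$, the sought contradiction. The natural raw material is the pair of identities $\mathbf w\approx\mathbf u$ and $\mathbf w\approx\mathbf v$, which are equivalent modulo $\mathrm{Id}(\mathbf V)$ precisely because $\mathbf V\models\mathbf u\approx\mathbf v$. For the \emph{meet} this equivalence is already enough: for any common stock $D\subseteq\mathrm{Id}(\mathbf V)$ of identities, setting $\mathbf Y=\var(D\cup\{\mathbf w\approx\mathbf u\})$ and $\mathbf Z=\var(D\cup\{\mathbf w\approx\mathbf v\})$ gives $\mathbf{V\wedge Y}$ and $\mathbf{V\wedge Z}$ the same equational theory, namely the one generated by $\mathrm{Id}(\mathbf V)$ together with $\mathbf w\approx 0$ (using $\mathbf u\approx 0,\mathbf v\approx 0\in\mathrm{Id}(\mathbf V)$). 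So $\mathbf{V\wedge Y=V\wedge Z}$ holds automatically.

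The main obstacle is the \emph{join}, i.e. $\mathrm{Id}(\mathbf V)\cap\mathrm{Id}(\mathbf Y)=\mathrm{Id}(\mathbf V)\cap\mathrm{Id}(\mathbf Z)$. With the crude choice $D=\varnothing$ this fails: for instance $\mathbf w\mathbf u\approx\mathbf u\mathbf w$ is a consequence of $\mathbf w\approx\mathbf u$ and lies in $\mathrm{Id}(\mathbf V)$ (both sides exceed $\mathbf u$, hence vanish in $\mathbf V$), yet it need not follow from $\mathbf w\approx\mathbf v$. The remedy is to enlarge the common floor $D$ inside $\mathrm{Id}(\mathbf V)$ enough to \emph{transport} every $\mathrm{Id}(\mathbf V)$-consequence of $\mathbf w\approx\mathbf u$ into a consequence of $D\cup\{\mathbf w\approx\mathbf v\}$, and symmetrically; concretely $D$ must absorb the ``bridging'' identities such as $\mathbf v\mathbf u\approx\mathbf u\mathbf v$ that equate the $\mathbf u$- and $\mathbf v$-shadows of each rewriting step on $\mathbf w$. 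The technical heart is then to verify $\mathrm{Id}(\mathbf V)\cap\mathrm{Id}(\mathbf Y)=\mathrm{Id}(\mathbf V)\cap\mathrm{Id}(\mathbf Z)$ by taking a deduction of an identity from $D\cup\{\mathbf w\approx\mathbf u\}$ and replacing each application of $\mathbf w\approx\mathbf u$ by an application of $\mathbf w\approx\mathbf v$, the residual discrepancies being exactly the bridging identities already placed in $D$.

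Finally I would check $\mathbf Y\ne\mathbf Z$, and this is where pairwise incomparability of $\mathbf u,\mathbf v,\mathbf w$ (with common content) is essential. Distinctness means that $\mathbf w\approx\mathbf u$ does \emph{not} follow from $D\cup\{\mathbf w\approx\mathbf v\}$; equivalently, the ``short'' identity $\mathbf u\approx\mathbf v$ must not be recoverable. Since $D\subseteq\mathrm{Id}(\mathbf V)$ is built only from contextual consequences of $\mathbf u\approx\mathbf v$ and never contains $\mathbf u\approx\mathbf v$ itself, recovering $\mathbf w\approx\mathbf u$ would require an application of $\mathbf w\approx\mathbf v$ acting on a subword of $\mathbf u$ or $\mathbf v$, i.e. an occurrence of a substitution instance of $\mathbf w$ inside $\mathbf u$ or $\mathbf v$ — impossible because $\mathbf w\not\le\mathbf u$ and $\mathbf w\not\le\mathbf v$. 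The genuine difficulty, and the crux of the whole argument, is to keep $\mathbf u\approx\mathbf v$ out of the floor while retaining enough of its consequences to run the join step: incomparability is precisely what separates these two demands. Granting this, cancellability yields $\mathbf Y=\mathbf Z$, a contradiction, and hence $\mathbf V\models\mathbf w\approx 0$, as required.
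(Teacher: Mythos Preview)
Your overall architecture matches the paper's: assume $\mathbf V\not\models\mathbf w\approx 0$, build two varieties from $\mathbf w\approx\mathbf u$ and $\mathbf w\approx\mathbf v$ over a common floor of identities, verify equal meet and join with $\mathbf V$, and contradict cancellability. The meet computation is correct. But there is a genuine gap at precisely the point you flag as ``the crux'': the floor $D$ is never actually constructed. You only list desiderata---large enough that the join argument goes through, small enough that $\mathbf w\approx\mathbf u$ does not follow from $D\cup\{\mathbf w\approx\mathbf v\}$---without exhibiting a $D$ that meets both. Your ``replace each application of $\mathbf w\approx\mathbf u$ by $\mathbf w\approx\mathbf v$'' scheme does not work as stated: after one replacement the current word has changed, so the next rewriting step of the original deduction need not apply at all, and the discrepancies compound rather than being absorbed by a fixed stock of bridging identities. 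Conversely, your distinctness argument (``recovering $\mathbf w\approx\mathbf u$ would require a substitution instance of $\mathbf w$ inside $\mathbf u$ or $\mathbf v$'') presumes that $D$ has essentially no deductive power, directly in tension with the join requirement.

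The paper resolves this with a concrete and quite different floor. Set $I=\{\mathbf s\in F\mid\mathbf u<\mathbf s\ \text{or}\ \mathbf v<\mathbf s\ \text{or}\ \mathbf w<\mathbf s\}$, let $\mathbf N=\var\{\mathbf s\approx 0\mid\mathbf s\in I\}$, and take $\mathbf U=\mathbf N\wedge\var\{\mathbf u\approx\mathbf w\}$, $\mathbf W=\mathbf N\wedge\var\{\mathbf v\approx\mathbf w\}$. Pairwise incomparability ensures $\mathbf u,\mathbf v,\mathbf w\notin I$, and one can then compute the equational theory of $\mathbf U$ \emph{exactly}: its non-trivial identities are just $\mathbf s\approx\mathbf t$ with $\mathbf s,\mathbf t\in I$ together with $\varphi(\mathbf u)\approx\varphi(\mathbf w)$ for $\varphi\in\Aut(F)$ (and similarly for $\mathbf W$). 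With this description, the join step needs no deduction surgery: any non-trivial identity of $\mathbf V\vee\mathbf U$ lies in $\mathbf U$, and since $\mathbf V\not\models\varphi(\mathbf u)\approx\varphi(\mathbf w)$ by hypothesis, both sides must lie in $I$; hence $\mathbf N\subseteq\mathbf V\vee\mathbf U$, giving $\mathbf V\vee\mathbf U=\mathbf V\vee\mathbf N=\mathbf V\vee\mathbf W$. Distinctness $\mathbf U\ne\mathbf W$ is read off directly from the explicit theory. This specific choice of the floor---killing everything strictly above $\mathbf u$, $\mathbf v$, $\mathbf w$ so that the residual identities become rigid---is the missing idea in your sketch.
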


\begin{proof}
Consider the set 
$$
I=\{\mathbf s\in F\mid\mathbf{u<s}\text{ or }\mathbf{v<s}\text{ or }\mathbf{w<s}\}.
$$ 
Note that $\mathbf u,\mathbf v,\mathbf w\notin I$ because these words are pairwise incomparable. Therefore, $\varphi(\mathbf u),\varphi(\mathbf v),\varphi(\mathbf w)\notin I$ for any $\varphi\in\Aut(F)$. Obviously, $I$ is a fully invariant ideal of $F$. Put
$$
\mathbf N=\var\{\mathbf s\approx 0\mid\mathbf s\in I\},\ \mathbf U=\mathbf N\wedge\var\{\mathbf{u\approx w}\}\text{ and }\mathbf W=\mathbf N\wedge\var\{\mathbf{v\approx w}\}.
$$

Now we interrupt the proof of Lemma~\ref{if u=v=0 then w=0} in order to prove the following

\begin{lemma}
\label{eq theory of U}
Non-trivial identities that hold in $\mathbf U$ are only the identities $\mathbf{s\approx t}$ with $\mathbf s,\mathbf t\in I$ and the identities $\varphi(\mathbf u)\approx\varphi(\mathbf w)$ or $\varphi(\mathbf w)\approx\varphi(\mathbf u)$ where $\varphi\in\Aut(F)$. Analogously, non-trivial identities that hold in $\mathbf W$ are only the identities $\mathbf{s\approx t}$ with $\mathbf s,\mathbf t\in I$ and the identities $\mathbf{\varphi(v)\approx\varphi(w)}$ or $\mathbf{\varphi(w)\approx\varphi(v)}$ where $\varphi\in\Aut(F)$.
\end{lemma}

\begin{proof}
By symmetry, it suffices to verify the first claim of the lemma. To do this, we will first describe identities $\mathbf{a\approx b}$ that directly follow from the identity system defining $\mathbf U$. Any such identity directly follows either from an identity $\mathbf s\approx 0$ where $\mathbf s\in I$ or from the identity $\mathbf{u\approx w}$. In the first case $\mathbf a,\mathbf b\in I$ because $I$ is a fully invariant ideal. In the second case we have either $\mathbf{a=c}\varphi(\mathbf u)\mathbf d$ and $\mathbf{b=c}\varphi(\mathbf w)\mathbf d$ or vice versa where $\mathbf c,\mathbf d\in F^1$ and $\varphi\in\End(F)$. If at least one of the words $\mathbf c$ and $\mathbf d$ is non-empty or $\varphi$ does not act on $\con(\mathbf u)$ as an automorphism of $F$ then $\mathbf{u<a}$ and $\mathbf{w<b}$ or vice versa (here we use the equality $\con(\mathbf u)=\con(\mathbf w)$). Hence $\mathbf a,\mathbf b\in I$ again. Finally, if $\mathbf c$ and $\mathbf d$ are empty and $\varphi$ acts on $\con(\mathbf u)$ as an automorphism of $F$ then we can suppose that $\varphi\in\Aut(F)$. 

In order to describe all non-trivial identities of $\mathbf U$, we consider a deduction of an identity $\mathbf{p\approx q}$ that holds in $\mathbf U$ from the basis of identities of $\mathbf U$:
$$
\mathbf{p=w}_0,\mathbf w_1,\dots,\mathbf w_k=\mathbf q.
$$
If $\mathbf w_0\in I$ then the identity $\mathbf w_0\approx\mathbf w_1$ does not have the form $\varphi(\mathbf u)\approx\varphi(\mathbf w)$ or $\varphi(\mathbf w)\approx\varphi(\mathbf u)$ where $\varphi\in\Aut(F)$ because $\varphi(\mathbf u),\varphi(\mathbf w)\notin I$. Hence $\mathbf w_1\in I$. Now a simple induction shows that $\mathbf w_2,\dots,\mathbf w_k\in I$. If $\mathbf w_0=\varphi(\mathbf u)$ then $\mathbf w_0\notin I$. Therefore, the identity $\mathbf w_0\approx\mathbf w_1$ has either the form $\psi(\mathbf u)\approx\psi(\mathbf w)$ or the form $\psi(\mathbf w)\approx\psi(\mathbf u)$ where $\psi\in\Aut(F)$. The latter is impossible because $\mathbf u$ and $\mathbf w$ are incomparable, whence $\varphi(\mathbf u)=\psi(\mathbf u)$ and $\varphi(\mathbf w)=\psi(\mathbf w)$. Hence the restriction of $\varphi$ on the set $\con(\mathbf u)$ coincides with the restriction of $\psi$ on this set. Since $\con(\mathbf u)=\con(\mathbf w)$, we can suppose that $\varphi=\psi$. Now a simple induction shows that $\mathbf w_2=\varphi(\mathbf u)$, $\mathbf w_3=\varphi(\mathbf w)$, \dots, so the identity $\mathbf{p\approx q}$ either is trivial or has the form $\varphi(\mathbf u)\approx\varphi(\mathbf w)$. Similar arguments show that if $\mathbf w_0=\varphi(\mathbf w)$ then the identity $\mathbf{p\approx q}$ either is trivial or has the form $\varphi(\mathbf w)\approx\varphi(\mathbf u)$.
\end{proof}

Let us return to the proof of Lemma~\ref{if u=v=0 then w=0}. The variety $\mathbf V\wedge\mathbf U$ satisfies the identities $\mathbf{v\approx u\approx w}$. Therefore, $\mathbf{V\wedge U\subseteq W}$, whence $\mathbf{V\wedge U\subseteq V\wedge W}$. Similar arguments show that $\mathbf{V\wedge W\subseteq V\wedge U}$, whence $\mathbf{V\wedge U=V\wedge W}$.

Suppose that $\mathbf V$ does not satisfy the identity $\mathbf w\approx 0$. Hence it satisfies none of the identities $\mathbf{u\approx w}$ and $\mathbf{v\approx w}$, and therefore none of the identities $\varphi(\mathbf u)\approx\varphi(\mathbf w)$ and $\varphi(\mathbf v)\approx\varphi(\mathbf w)$ where $\varphi\in\Aut(F)$. Let us consider a non-trivial identity $\mathbf{a\approx b}$ which holds in $\mathbf{V\vee U}$. By Lemma~\ref{eq theory of U}, $\mathbf a,\mathbf b\in I$. We see that $\mathbf{N\subseteq V\vee U}$, whence $\mathbf{V\vee N\subseteq V\vee U}$. On the other hand, $\mathbf{V\vee U\subseteq V\vee N}$ because $\mathbf{U\subseteq N}$. Therefore, $\mathbf{V\vee N=V\vee U}$. Similar arguments show that $\mathbf{V\vee N=V\vee W}$, whence $\mathbf{V\vee U=V\vee W}$. Since the variety $\mathbf V$ is a cancellable element of $\mathbb{SEM}$, we have that $\mathbf{U=W}$. However, this contradicts Lemma~\ref{eq theory of U}.
\end{proof}

\begin{lemma}
\label{incomparable words}
Non-equivalent words with equal length and equal content are incomparable.
\end{lemma}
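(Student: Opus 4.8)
The plan is to prove the contrapositive within the stated class of words: assuming that two words of equal length and equal content are \emph{comparable}, I will show they must be equivalent. So suppose $\mathbf u$ and $\mathbf v$ satisfy $\ell(\mathbf u)=\ell(\mathbf v)$ and $\con(\mathbf u)=\con(\mathbf v)$, and that, say, $\mathbf u\le\mathbf v$; by the definition of $\le$ there are $\xi\in\End(F)$ and $\mathbf a,\mathbf b\in F^1$ with $\mathbf v=\mathbf a\,\xi(\mathbf u)\,\mathbf b$. The case $\mathbf v\le\mathbf u$ is symmetric, so it suffices to treat this one.

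The first and main step is a length count. Since $\xi$ replaces every occurrence of a letter of $\mathbf u$ by a non-empty word, we have $\ell(\xi(\mathbf u))=\sum\ell(\xi(x))\ge\ell(\mathbf u)$, the sum running over the $\ell(\mathbf u)$ letter occurrences of $\mathbf u$ and each summand being at least $1$. Hence
$$
\ell(\mathbf v)=\ell(\mathbf a)+\ell(\xi(\mathbf u))+\ell(\mathbf b)\ge\ell(\mathbf u)=\ell(\mathbf v),
$$
so every inequality in this chain is an equality. This forces $\mathbf a$ and $\mathbf b$ to be empty and $\ell(\xi(x))=1$ for every $x\in\con(\mathbf u)$; in other words, $\xi$ sends each letter occurring in $\mathbf u$ to a single letter, and $\mathbf v=\xi(\mathbf u)$.

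The second step uses the content hypothesis. Because $\xi$ maps the letters of $\con(\mathbf u)$ to letters, we get $\con(\mathbf v)=\con(\xi(\mathbf u))=\{\,\xi(x)\mid x\in\con(\mathbf u)\,\}$, so $\xi$ maps the finite set $\con(\mathbf u)$ onto $\con(\mathbf v)=\con(\mathbf u)$. A surjective self-map of a finite set is a bijection, so the restriction of $\xi$ to $\con(\mathbf u)$ is a permutation of $\con(\mathbf u)$. Extending this permutation to a bijection of the whole (countable) alphabet yields an automorphism $\varphi\in\Aut(F)$ agreeing with $\xi$ on $\con(\mathbf u)$; since $\mathbf u$ involves only letters from $\con(\mathbf u)$, we obtain $\mathbf v=\xi(\mathbf u)=\varphi(\mathbf u)$. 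As noted just before the lemma, this means $\mathbf u$ and $\mathbf v$ are equivalent, contradicting non-equivalence.

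I expect the length count to be the crux: it is exactly what rules out $\xi$ blowing a letter up into a longer word and what forces the prefix $\mathbf a$ and suffix $\mathbf b$ to vanish, so that $\mathbf v$ is literally $\xi(\mathbf u)$ with $\xi$ acting letter-to-letter. Everything afterwards—surjectivity forcing bijectivity on a finite set, and extending a permutation of $\con(\mathbf u)$ to an automorphism of $F$—is routine bookkeeping, and I anticipate no serious obstacle beyond tracking these length and content computations carefully.
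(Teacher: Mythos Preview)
Your proof is correct and follows essentially the same route as the paper's: a length count forces the prefix and suffix to be empty and the endomorphism to act letter-to-letter, and then a content count shows this letter map is a bijection, yielding an automorphism. The only difference is expository detail---you spell out the surjection-on-a-finite-set step and the extension to $\Aut(F)$, while the paper leaves these implicit.
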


\begin{proof} 
Suppose that $\mathbf a$ and $\mathbf b$ are non-equivalent words with equal length and equal content and $\mathbf{a=c}\varphi(\mathbf b)\mathbf d$ where $\mathbf c,\mathbf d\in F^1$ and $\varphi\in\End(F)$. We have $\ell(\mathbf a)=\ell(\mathbf b)\le\ell(\mathbf c\varphi(\mathbf b)\mathbf d)=\ell(\mathbf a)$ which is possible only when $\mathbf c$ and $\mathbf d$ are empty and $\varphi$ maps each letter from $\con(\mathbf b)$ to a letter. Furthermore, $|\con(\mathbf a)|=|\con(\mathbf b)|\ge|\con(\varphi(\mathbf b))|=|\con(\mathbf a)|$, whence $\varphi$ is one-to-one on $\con(\mathbf a)$. Hence we can suppose that $\varphi\in\Aut(F)$.
\end{proof}

\begin{lemma}
\label{3 0-reduced identities}
If a nil-variety of semigroups $\mathbf V$ is a cancellable element of the lattice $\mathbb{SEM}$ then it satisfies the identities 
\begin{equation}
\label{xxy=xyx=yxx=0}
x^2y\approx xyx\approx yx^2\approx 0.
\end{equation}
\end{lemma}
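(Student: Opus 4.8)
The three words $x^2y$, $xyx$ and $yx^2$ have equal length and equal content $\{x,y\}$ and are pairwise non-equivalent, so by Lemma~\ref{incomparable words} they are pairwise incomparable. Hence, by Lemma~\ref{if u=v=0 then w=0}, as soon as $\mathbf V$ satisfies two of the three identities in~\eqref{xxy=xyx=yxx=0} it satisfies the third. Moreover the anti-isomorphism of $\mathbb{SEM}$ induced by passage to opposite semigroups preserves cancellable elements and nil-varieties and interchanges the identities $x^2y\approx0$ and $yx^2\approx0$ (fixing $xyx\approx0$). Thus I would first reduce the whole lemma to a single claim: \emph{every cancellable nil-variety satisfies $x^2y\approx0$}; applying this to the dual variety then yields $yx^2\approx0$, and Lemma~\ref{if u=v=0 then w=0} supplies $xyx\approx0$.

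Since $\mathbf V$ is a nil-variety, $\mathbf V\models x^n\approx0$ for some $n$ by Lemma~\ref{split}(ii). If $\mathbf V\models x^2\approx0$, then $x^2y\approx0$ follows immediately because $x^2y$ lies above $x^2$ in the quasi-order $\le$, so I may assume $n\ge3$. The plan is then to prove $x^2y\approx0$ by contradiction: assuming $\mathbf V\not\models x^2y\approx0$, produce varieties $\mathbf U\ne\mathbf W$ with $\mathbf{V\vee U=V\vee W}$ and $\mathbf{V\wedge U=V\wedge W}$, contradicting cancellability of $\mathbf V$. Modelling the construction on the proof of Lemma~\ref{if u=v=0 then w=0}, I would fix the $0$-reduced variety $\mathbf N$ whose annihilator coincides with that of $\mathbf V$ (so that $\mathbf{V\subseteq N}$ while $\mathbf N\not\models x^2y\approx0$), and take $\mathbf U$ and $\mathbf W$ to be $\mathbf N$ enriched by two suitably chosen \emph{linking} identities tying $x^2y$ to other words — arranged so that both links fail in $\mathbf V$ (because $\mathbf V\not\models x^2y\approx0$), yet, together with $x^n\approx0$, collapse to $x^2y\approx0$ inside the meet with $\mathbf V$.

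The decisive point, and the main obstacle, is that the nil-identity cannot be used to \emph{seed} Lemma~\ref{if u=v=0 then w=0} directly. Indeed, applying the endomorphism $y\mapsto x$ gives $x^2y\le x^3\le x^n$, so $x^n$ — and hence every word that $\mathbf V$ is already known to annihilate — lies above $x^2y$ in the order $\le$. Consequently no word of content $\{x,y\}$ that $\mathbf V$ sends to $0$ is incomparable to $x^2y$, and Lemma~\ref{if u=v=0 then w=0} simply does not apply to any triple containing $x^2y$. This is precisely why the first of the three identities must be extracted by a genuine cancellability argument rather than by the incomparability machinery, and why a bespoke pair $(\mathbf U,\mathbf W)$ is unavoidable.

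The technically hard part is therefore the verification of the two join equalities and the two meet equalities. Following the pattern of the nested lemma inside the proof of Lemma~\ref{if u=v=0 then w=0}, I would first determine the full equational theories of $\mathbf U$ and $\mathbf W$ — the $0$-reduced identities of $\mathbf N$ together with the $\Aut(F)$-images of the two links — and then carry out a deduction analysis in the spirit of the proof of Proposition~\ref{cancellable from V to Perm_n(V)}: showing that every identity surviving into $\mathbf{V\vee U}$ already holds in $\mathbf N$ (the links dropping out since they fail in $\mathbf V$), and that the union of the theories of $\mathbf V$ and $\mathbf U$ deduces exactly $x^2y\approx0$ in the meet (using $x^n\approx0$). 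Arranging the two links so that $\mathbf U$ and $\mathbf W$ are genuinely \emph{distinct} while remaining invisible to both the join and the meet with $\mathbf V$ is the delicate step on which the argument turns.
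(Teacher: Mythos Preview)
Your reduction in the first paragraph is fine and matches the paper's final steps: once $x^2y\approx 0$ and $yx^2\approx 0$ are known, Lemma~\ref{if u=v=0 then w=0} together with Lemma~\ref{incomparable words} immediately yields $xyx\approx 0$, and the left--right duality halves the work.

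The genuine gap is your third paragraph. From the (correct) observation that every word \emph{initially} known to be annihilated lies above $x^2y$ under $\le$, you conclude that ``Lemma~\ref{if u=v=0 then w=0} simply does not apply to any triple containing $x^2y$''. This inference is invalid: the lemma can first be applied to triples that do \emph{not} contain $x^2y$, thereby enlarging the stock of words known to equal $0$, and some of the new words \emph{are} incomparable to $x^2y$. The paper's proof is exactly such a bootstrap. Starting from $x^ny\approx yx^n\approx 0$ (these have content $\{x,y\}$ and are incomparable by Lemma~\ref{incomparable words}), one obtains $x^{n-k}yx^k\approx 0$ for all $k$. Using two of these length-$(n{+}1)$ words together with the shorter word $x^{n-1}y$ (and checking incomparability by hand, since the lengths differ), one descends to $x^{n-1}y\approx 0$; iterating brings one down to $x^3y\approx yx^3\approx 0$ and then $x^2yx\approx xyx^2\approx 0$. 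At that stage one also gets $(xy)^2\approx 0$, and the triple $\{xyx^2,\ (xy)^2,\ x^2y\}$ is pairwise incomparable (the only square occurring in $xyx^2$ or in $(xy)^2$ is a suffix, so $x^2y\not\le$ either of them), which delivers $x^2y\approx 0$. Thus the incomparability machinery \emph{does} suffice, and no bespoke $(\mathbf U,\mathbf W)$ construction is needed.

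Because you believed the direct route blocked, the remainder of your proposal is only a plan: the ``two suitably chosen linking identities'' are never specified, the equational theories of $\mathbf U$ and $\mathbf W$ are never computed, and you yourself flag the decisive verification as ``the delicate step on which the argument turns'' without carrying it out. As written, the proposal does not constitute a proof; and the obstacle that motivated the detour is illusory.
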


\begin{proof}
Being a nil-variety, $\mathbf V$ satisfies identities $x^ny\approx yx^n\approx 0$ for some $n$. Now Lemma~\ref{incomparable words} is applied with the conclusion that the words $x^ny$, $yx^n$ and $x^{n-k}yx^k$ for all $k=1,2,\dots,n-1$ are pairwise incomparable. Then Lemma~\ref{if u=v=0 then w=0} implies that the variety $\mathbf V$ satisfies the identities $x^{n-k}yx^k\approx 0$ for all $k=1,2,\dots,n-1$. Suppose that $n\ge 4$. One can consider the words $x^{n-2}yx^2$, $x^{n-3}yx^3$ and $x^{n-1}y$. We have $x^{n-2}yx^2\not\le x^{n-1}y$ and $x^{n-3}yx^3\not\le x^{n-1}y$ because $\ell(x^{n-2}yx^2)=\ell(x^{n-3}yx^3)>\ell(x^{n-1}y)$. The words $x^{n-2}yx^2$ and $x^{n-3}yx^3$ are incomparable by Lemma~\ref{incomparable words}. Finally, $x^{n-1}y\not\le x^{n-2}yx^2$ and $x^{n-1}y\not\le x^{n-3}yx^3$ because the words $x^{n-2}yx^2$ and $x^{n-3}yx^3$ do not contain any \mbox{$(n-1)$-th} powers which are not their suffixes (note that $x^{n-3}yx^3$ contains the suffix $x^3$ which is an \mbox{$(n-1)$-th} power if $n=4$). Now we can apply Lemma~\ref{if u=v=0 then w=0} and conclude that $\mathbf V$ satisfies the identity $x^{n-1}y\approx 0$. By the dual arguments, $\mathbf V$ satisfies $yx^{n-1}\approx 0$ as well. Now a simple induction shows that $\mathbf V$ satisfies the identities $x^{n-2}y\approx yx^{n-2}\approx 0$, $x^{n-3}y\approx yx^{n-3}\approx 0$, \dots, $x^3y\approx yx^3\approx 0$. As we have observed in the beginning of this paragraph, the identities $x^ny\approx yx^n\approx 0$ imply in $\mathbf V$ the identities $x^{n-k}yx^k\approx 0$ for all $k=1,2,\dots,n-1$. Now we apply this observation with $n=3$ and $k=1,2$ with the conclusion that $\mathbf V$ satisfies the identities $x^2yx\approx xyx^2\approx 0$.
 
Lemma~\ref{incomparable words} implies that the words $x^3y$, $yx^3$ and $(xy)^2$ are pairwise incomparable. Then $\mathbf V$ satisfies the identity $(xy)^2\approx 0$ by Lemma~\ref{if u=v=0 then w=0}. Now we consider the words $xyx^2$, $(xy)^2$ and $x^2y$. Since $\ell(xyx^2)=\ell((xy)^2)>\ell(x^2y)$, we conclude that $xyx^2\not\le x^2y$ and $(xy)^2\not\le x^2y$. The words $xyx^2$ and $(xy)^2$ are incomparable by Lemma~\ref{incomparable words}. Finally, we have $x^2y\not\le xyx^2$ and $x^2y\not\le(xy)^2$ because the words $xyx^2$ and $(xy)^2$ do not contain any squares that are not their suffixes. By Lemma~\ref{if u=v=0 then w=0}, $\mathbf V$ satisfies the identity $x^2y\approx 0$. By the dual arguments, $\mathbf V$ satisfies also the identity $yx^2\approx 0$. The words $x^2y$, $yx^2$ and $xyx$ are incomparable by Lemma~\ref{incomparable words}. It remains to apply Lemma~\ref{if u=v=0 then w=0} and conclude that $\mathbf V$ satisfies the identity $xyx\approx 0$. 
\end{proof}

Now we are well prepared to prove necessity in Theorems~\ref{main semigroup} and~\ref{main epigroup}.

\smallskip 

\emph{Proof of Theorem}~\ref{main semigroup}. \emph{Necessity}. Let $\mathbf V$ be a proper semigroup variety that is a cancellable element of the lattice $\mathbb{SEM}$. Since each cancellable element is modular, Proposition~\ref{mod nec} implies that $\mathbf{V=M\vee N}$ where $\mathbf M\in\{\mathbf T,\mathbf{SL}\}$ and $\mathbf N$ is a nil-variety. In view of Corollary~\ref{join with SL}, we can assume that $\mathbf{V=N}$. We need to verify that $\mathbf{V=T}$ or $\mathbf{V=X}_{m,n}$ or $\mathbf{V=Y}_{m,n}$ for some $2\le m\le n\le\infty$.

By Lemma~\ref{3 0-reduced identities}, the variety $\mathbf V$ satisfies the identities~\eqref{xxy=xyx=yxx=0}. If each identity that holds in $\mathbf V$ follows from~\eqref{xxy=xyx=yxx=0} then $\mathbf{V=X}_{\infty,\infty}$. Suppose that $\mathbf V$ satisfies an identity $\mathbf{u\approx v}$ that does not follow from the identities~\eqref{xxy=xyx=yxx=0}. These identities imply any identities of the form $\mathbf p\approx 0\approx\mathbf q$ such that each of the words $\mathbf p$ and $\mathbf q$ is non-linear and is not the square of a letter. Hence we can assume without loss of generality that either $\mathbf u=x_1x_2\cdots x_n$ for some $n$ or $\mathbf u=x^2$. Now we can apply Lemma~\ref{split} and conclude that $\mathbf V$ satisfies either the identity $x^2\approx 0$ or the identity~\eqref{nilpotence} or some permutational identity of length $n$. In the last case $\mathbf V$ satisfies all permutational identities of length $n$ by Proposition~\ref{alternative for cancellable}. Thus, $\mathbf V$ is given within $\mathbf X_{\infty,\infty}$ either by the identity~\eqref{nilpotence} for some natural $n$ or by the idenity $x^2\approx 0$ or by the identity system
\begin{equation}
\label{all permutations}
\{p_m[\sigma]\mid\sigma\in S_m\}
\end{equation}
for some natural $m$ or by a combination of the listed idenities and identity system. Clearly, the identity~\eqref{nilpotence} implies the system~\eqref{all permutations} whenever $m\ge n$. Evidently, all the saying above is equivalent to the desirable conclusion.

Necessity of Theorem~\ref{main semigroup} is proved. The following observation will be used in the proof of necessity of Theorem~\ref{main epigroup}. As we have mentioned in Section~\ref{intr}, the lattice $\mathbb{PER}$ of all periodic semigroup varieties is a sublattice in both the lattices $\mathbb{SEM}$ and $\mathbb{EPI}$. Note that the varieties $\mathbf U$ and $\mathbf W$ that appear in the proofs of Lemmas~\ref{if u=v=0 then w=0} and~\ref{eq theory of U} are nil-varieties and therefore, are periodic. Hence the proof of necessity of Theorem~\ref{main semigroup} implies the following

\begin{corollary}
\label{nec epi nil}
If a periodic semigroup variety $\mathbf V$ is a cancellable element of the lattice $\mathbb{PER}$ then $\mathbf{V=M\vee N}$ where $\mathbf M$ is one of the varieties $\mathbf T$ or $\mathbf{SL}$, while $\mathbf N$ is one of the varieties $\mathbf T$, $\mathbf X_{m,n}$ or $\mathbf Y_{m,n}$ with $2\le m\le n\le\infty$.\qed
\end{corollary}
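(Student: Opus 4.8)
The plan is to show that the entire argument establishing necessity in Theorem~\ref{main semigroup} can be replayed inside the sublattice $\mathbb{PER}$, so that the a priori weaker hypothesis that $\mathbf V$ is a cancellable element of $\mathbb{PER}$ already forces the asserted form of $\mathbf V$. The decisive observation, announced just before the statement, is that the cancellability of $\mathbf V$ enters that argument at only two places — inside the proof of Proposition~\ref{cancellable from V to Perm_n(V)} and inside the proof of Lemma~\ref{if u=v=0 then w=0} — and that in each of these the auxiliary varieties to which the cancellation implication is applied (the nilpotent varieties $\mathbf X_1,\mathbf X_2$ in the former, and the nil-varieties $\mathbf U,\mathbf W$ together with $\mathbf N$ in the latter) are periodic, hence lie in $\mathbb{PER}$. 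Consequently every such application needs the implication $\mathbf{V\vee U=V\vee W\ \&\ V\wedge U=V\wedge W\rightarrow U=W}$ only for periodic $\mathbf U,\mathbf W$, which is precisely what cancellability in $\mathbb{PER}$ supplies.

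First I would transfer to $\mathbb{PER}$ the two ingredients of the necessity proof that are borrowed in the form stated for $\mathbb{SEM}$. The reduction to the nil case rests, via Lemma~\ref{neutral atom}, on $\mathbf{SL}$ being a neutral atom; both properties are inherited by the sublattice $\mathbb{PER}$, since any three members of $\mathbb{PER}$ generate the same sublattice in $\mathbb{PER}$ as in $\mathbb{SEM}$ (so distributivity of that sublattice carries over) and since $\mathbf T$ is the only proper subvariety of $\mathbf{SL}$. Thus Lemma~\ref{neutral atom} yields the $\mathbb{PER}$-analogue of Corollary~\ref{join with SL}, and one may assume $\mathbf V=\mathbf N$ is a nil-variety. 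The splitting $\mathbf V=\mathbf M\vee\mathbf N$ itself comes from Proposition~\ref{mod nec}; here one uses that a cancellable element of $\mathbb{PER}$ is in particular a modular element of $\mathbb{PER}$, and that the varieties witnessing the failure of modularity in the proof of Proposition~\ref{mod nec} are themselves periodic, so that necessary condition is available inside $\mathbb{PER}$ as well.

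With these two inputs secured inside $\mathbb{PER}$, the rest is a line-by-line transcription. Lemma~\ref{3 0-reduced identities} forces the identities~\eqref{xxy=xyx=yxx=0} — its sole use of cancellability is through Lemma~\ref{if u=v=0 then w=0}, whose test varieties are nil — and then Lemma~\ref{split} together with the $\mathbb{PER}$-version of Proposition~\ref{alternative for cancellable} (which rests on Proposition~\ref{cancellable from V to Perm_n(V)}, whose test varieties $\mathbf X_1,\mathbf X_2$ are nilpotent) pins $\mathbf N$ down to one of $\mathbf T$, $\mathbf X_{m,n}$ or $\mathbf Y_{m,n}$ exactly as in the proof of Theorem~\ref{main semigroup}. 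I expect the only genuinely delicate point to be the bookkeeping of the previous paragraph: one must check scrupulously that every appearance of the ambient lattice $\mathbb{SEM}$ — and in particular the two borrowed results Corollary~\ref{join with SL} and Proposition~\ref{mod nec} — is legitimately replaced by its $\mathbb{PER}$-counterpart, the safeguard in each instance being the periodicity of all varieties that actually enter the computations.
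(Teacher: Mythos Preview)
Your proposal is correct and follows exactly the approach the paper takes: the corollary is stated immediately after the remark that the auxiliary varieties $\mathbf U$ and $\mathbf W$ in Lemmas~\ref{if u=v=0 then w=0} and~\ref{eq theory of U} are nil (hence periodic), so that the whole necessity argument for Theorem~\ref{main semigroup} can be rerun inside $\mathbb{PER}$. You are actually more thorough than the paper, which mentions only the varieties from Lemmas~\ref{if u=v=0 then w=0} and~\ref{eq theory of U} explicitly; your additional bookkeeping for the nilpotent test varieties $\mathbf X_1,\mathbf X_2$ in Proposition~\ref{cancellable from V to Perm_n(V)} and for the $\mathbb{PER}$-versions of Corollary~\ref{join with SL} and Proposition~\ref{mod nec} fills in details the paper leaves implicit.
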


\emph{Proof of Theorem}~\ref{main epigroup}. \emph{Necessity}. Let an epigroup variety $\mathbf V$ be a cancellable element of the lattice $\mathbb{EPI}$. Clearly, $\mathbf V$ is a modular element of $\mathbb{EPI}$. Now Proposition~\ref{mod nec} is applied with the conclusion that $\mathbf{V=M\vee N}$ where $\mathbf M$ is one of the varieties $\mathbf T$ or $\mathbf{SL}$, while $\mathbf N$ is a nil-variety. Then the variety $\mathbf V$ is periodic, whence it can be considered as a semigroup variety. Clearly, $\mathbf V$ is a cancellable element of the lattice $\mathbb{PER}$. It remains to refer to Corollary~\ref{nec epi nil}.

\section{The proof of Theorems \ref{main semigroup} and \ref{main epigroup}: the ``if'' part}
\label{if part}

First of all, we note that the known results easily imply that the varieties $\mathbf X_{n,n}$ and $\mathbf Y_{n,n}$ with $2\le n\le\infty$ are cancellable elements of both the lattices $\mathbb{SEM}$ and $\mathbb{EPI}$. Indeed, these varieties are distributive elements of the lattice $\mathbb{SEM}$ by~\cite[Theorem~1.1]{Vernikov-Shaprynskii-10} and modular elements of this lattice by~\cite[Corollary~1.2]{Vernikov-Shaprynskii-10}. It is well known that a distributive and modular element of a lattice is a standard element (see~\cite[Lemma~II.1.1]{Gratzer-Schmidt-61}, for instance). Since a standard element of a lattice is a cancellable one, we are done in the semigroup case. In the epigroup case it suffices to refer to the fact that the mentioned varieties are standard elements of $\mathbb{EPI}$ by~\cite[Theorem~1.1 and Corollary~1.2]{Skokov-15}. But the proof given below embraces all varieties mentioned in Theorems~\ref{main semigroup} and~\ref{main epigroup}, including the varieties $\mathbf X_{n,n}$ and $\mathbf Y_{n,n}$.

It is convenient for us to start the proof of sufficiency with Theorem~\ref{main epigroup}.

\smallskip

\emph{Proof of Theorem}~\ref{main epigroup}. \emph{Sufficiency}. Suppose that an epigroup variety $\mathbf V$ has the form indicated in the formulation of Theorem~\ref{main epigroup}. We need to verify that $\mathbf V$ is a cancellable element of $\mathbb{EPI}$. Let $\mathbf U$ and $\mathbf W$ be epigroup varieties such that $\mathbf{V\vee U=V\vee W}$ and $\mathbf{V\wedge U=V\wedge W}$. We need to check that $\mathbf{U=W}$. Corollary~\ref{join with SL} allows us to suppose that $\mathbf V$ is one of the varieties $\mathbf T$, $\mathbf X_{m,n}$ or $\mathbf Y_{m,n}$ with $2\le m\le n\le\infty$. Thus, $\mathbf V$ is a nil-variety. In particular, it is periodic. The case when $\mathbf{V=T}$ is evident. By symmetry, it suffices to show that a non-trivial identity $\mathbf{u\approx v}$ holds in $\mathbf W$ whenever it holds in $\mathbf U$. So, let $\mathbf U$ satisfy the identity $\mathbf{u\approx v}$. Note that if this identity holds in $\mathbf V$ then it holds in $\mathbf{V\vee U=V\vee W}$ and therefore, in $\mathbf W$.

It is evident that the varieties $\mathbf U$ and $\mathbf W$ are either both periodic or both non-periodic, because otherwise one of the varieties $\mathbf{V\vee U}$ or $\mathbf{V\vee W}$ is periodic and the other one is not, contradicting the equality $\mathbf{V\vee U=V\vee W}$. By Corollary~\ref{join with SL}, we can assume that $\mathbf U,\mathbf{W\supseteq SL}$. Lemma~\ref{identities in SL} implies now that $\con(\mathbf u)=\con(\mathbf v)$. Further considerations are divided into two cases.

\smallskip

\emph{Case}~1: $|\con(\mathbf u)|=1$. Here there are three subcases.

\smallskip

\emph{Subcase}~1.1: $\mathbf u$ and $\mathbf v$ are semigroup words. This means that the identity $\mathbf{u\approx v}$ has the form $x^m\approx x^n$ for some different $m$ and $n$. Therefore, the variety $\mathbf U$ is periodic. Hence the variety $\mathbf{V\vee U=V\vee W}$ is periodic too, and so the variety $\mathbf W$ has the same property. Thus, we can suppose that all varieties under consideration are semigroup varieties. Let $F_1$ be the free cyclic semigroup. We denote by $\alpha$, $\beta$ and $\gamma$ the fully invariant congruences on $F_1$ corresponding to $\mathbf V$, $\mathbf U$ and $\mathbf W$ respectively. The equalities $\mathbf{V\vee U=V\vee W}$ and $\mathbf{V\wedge U=V\wedge W}$ imply that $\alpha\wedge\beta=\alpha\wedge\gamma$ and $\alpha\vee\beta=\alpha\vee\gamma$. It is well known that the congruence lattice of an arbitrary cyclic semigroup is distributive (see~\cite[Theorem~2.17]{Mitsch-83}, for instance). In particular, the lattice of congruences on $F_1$ is distributive. Therefore, each of its element is cancellable. Hence $\beta=\gamma$. Since $x^m\beta x^n$, we have $x^m\gamma x^n$. Thus, the identity $x^m\approx x^n$ holds in $\mathbf W$.

\smallskip
 
\emph{Subcase}~1.2: $\mathbf u$ and $\mathbf v$ are not semigroup words. Then Lemma~\ref{x*=0} applies with the conclusion that the variety $\mathbf V$ satisfies the identities $\mathbf u\approx 0\approx\mathbf v$ and we are done.

\smallskip 

\emph{Subcase}~1.3: one of the words $\mathbf u$ and $\mathbf v$, say $\mathbf u$, is a semigroup word, while the other is not. Then $\mathbf u=x^m$ for some natural $m$. By Lemma~\ref{unary identity}, there are $p\ge 0$ and $q\in\mathbb N$ such that the variety $\mathbf U$ satisfies the identity 
\begin{equation}
\label{v=x^p(x*)^q}
\mathbf v\approx x^p\,\overline x\,^q
\end{equation}
and therefore, the identities $x^m\approx\mathbf v\approx x^p\,\overline x\,^q$. Thus, the identity
\begin{equation}
\label{x^m=x^p(x*)^q}
x^m\approx x^p\,\overline x\,^q.
\end{equation}
holds in $\mathbf U$.

We denote by $\Gr S$ the set of all group elements of an epigroup $S$. Let $S\in\mathbf U$ and $x\in S$. If $p\le q$ then Lemma~\ref{x^omega} implies that
$$
x^m=x^p\,\overline x\,^q=x^\omega\,\overline x\,^{q-p}\in\Gr S.
$$
Further, let $p>q$. It is well known and can be easily checked (see~\cite{Shevrin-94,Shevrin-05}, for instance) that an arbitrary epigroup satisfies the identities $x^\omega x\approx xx^\omega\approx\,\overline{\overline x}$\,. This fact and Lemma~\ref{x^omega} imply that
$$
x^m=x^p\,\overline x\,^q=x^{p-q}x^\omega=x^{p-q}(x^\omega)^{p-q}=(xx^\omega)^{p-q}=\bigl(\,\overline{\overline x}\,\bigr)^{p-q}\in\Gr S.
$$
So, $x^m\in\Gr S$ in any case, whence $\mathbf U$ satisfies the identity
\begin{equation}
\label{x^m=x^m x^omega}
x^m\approx x^mx^\omega.
\end{equation}

The identity $x^m\approx\mathbf v$ holds in $\mathbf{V\wedge U=V\wedge W}$. Therefore, there is a deduction of this identity from identities of the varieties $\mathbf V$ and $\mathbf W$. In particular, one of these varieties satisfies a non-trivial identity of the form $x^m\approx\mathbf w$. If this identity holds in $\mathbf V$ then Lemmas~\ref{split}(ii) and~\ref{x*=0} imply that $\mathbf V$ satisfies the identities $x^m\approx 0\approx\mathbf v$ and we are done.

It remains to consider the case when the identity $x^m\approx\mathbf w$ holds in $\mathbf W$. Suppose at first that $\mathbf w$ is a semigroup word. If $\ell(\mathbf w)=m$ then there exists a letter $y\in\con(\mathbf w)$ with $y\ne x$. Substituting $y^2$ to $y$ in the identity $x^m\approx\mathbf w$, we obtain an identity of the form $x^m\approx\mathbf w'$ with $\ell(\mathbf w')\ne m$. Thus, we can assume that $\ell(\mathbf w)\ne m$. Then equating all letters from $\con(\mathbf w)$ to $x$, we deduce from $x^m\approx\mathbf w$ an identity of the form $x^m\approx x^n$ with $m\ne n$. Thus, the variety $\mathbf W$ is periodic. Therefore, $\mathbf U$ is periodic too. Thus, all the varieties under consideration are periodic. Being periodic, the variety $\mathbf{U\vee W}$ satisfies an identity of the form $x^r\approx x^{r+s}$ for some natural $r$ and $s$. It is easy to see that this identity implies an identity of the form $\overline x\,\approx x^{(r+q)s-1}$ for any natural $q$. Clearly, $(r+q)s-1>m$ for some $q$. Therefore, $\mathbf{U\vee W}$ satisfies the identity $\mathbf v\approx x^k$ for some $k>m$. Then the identities $x^m\approx\mathbf v\approx x^k$ hold in $\mathbf U$. The arguments given in Subcase~1.1 imply that the variety $\mathbf W$ satisfies the identities $x^m\approx x^k\approx\mathbf v$.

Suppose now that $\mathbf w$ is a non-semigroup word. Considerations analogous to mentioned in the second paragraph of Subcase~1.3 allow us to check that the variety $\mathbf W$ satisfies the identity~\eqref{x^m=x^m x^omega}. Lemmas~\ref{x^omega} and~\ref{x*=0} imply that $\mathbf V$ satisfies the identities $x^m x^\omega\approx x^{m+1}\,\overline x\,\approx 0\approx x^p\,\overline x\,^q$. Further, the identities $x^mx^\omega\approx x^m\approx x^p\,\overline x\,^q$ hold in $\mathbf U$ by~\eqref{x^m=x^p(x*)^q} and~\eqref{x^m=x^m x^omega}. So, the identity
\begin{equation}
\label{x^m x^omega=x^p(x*)^q}
x^m x^\omega\approx x^p\,\overline x\,^q
\end{equation}
holds in the variety $\mathbf{V\vee U=V\vee W}$ and therefore, in $\mathbf W$.

Lemma~\ref{x*=0} implies that $\mathbf V$ satisfies the identities $\mathbf v\approx 0\approx x^p\,\overline x\,^q$. Therefore, the identity~\eqref{v=x^p(x*)^q} holds in $\mathbf{V\vee U=V\vee W}$ and therefore, in $\mathbf W$. Combining the identities~\eqref{v=x^p(x*)^q},~\eqref{x^m=x^m x^omega} and~\eqref{x^m x^omega=x^p(x*)^q}, we obtain that $\mathbf W$ satisfies the identities $x^m\approx x^m x^\omega\approx x^p\,\overline x\,^q\approx\mathbf v$ that completes the proof in Case~1.

\smallskip

\emph{Case}~2: $|\con(\mathbf u)|=k>1$. Every non-semigroup word equals~0 in $\mathbf V$ by Lemma~\ref{x*=0}. Further, every semigroup non-linear word depending on $>1$ letters also equals~0 in $\mathbf V$ because $\mathbf V$ satisfies the identities~\eqref{xxy=xyx=yxx=0}. Thus, if neither $\mathbf u$ nor $\mathbf v$ is a semigroup linear word then the identities $\mathbf u\approx 0\approx\mathbf v$ hold in $\mathbf V$ and we are done. Hence we can suppose without loss of generality that $\mathbf u= x_1x_2\cdots x_k$. Since $\con(\mathbf u)=\con(\mathbf v)$, we have $\ell(\mathbf v)\ge k$. Further considerations are divided into three subcases.

\smallskip

\emph{Subcase}~2.1: $\mathbf v$ is not a semigroup word. Using arguments from Case~2 in the proof of sufficiency of Theorem~1.1 in the article~\cite{Skokov-18}, we can prove that both the varieties $\mathbf U$ and $\mathbf W$ satisfy the identity
$$
x_1x_2\cdots x_k\approx x_1\cdots x_{i-1}\cdot\overline{\overline{x_i\cdots x_j}}\cdot x_{j+1}\cdots x_k
$$
for some $i,j$ with $1\le i\le j\le k$. Using Lemma~\ref{x*=0} we obtain that the variety $\mathbf V$ satisfies the identities
$$
x_1\cdots x_{i-1}\cdot\overline{\overline{x_i\cdots x_j}}\cdot x_{j+1}\cdots x_k\approx 0\approx\mathbf v.
$$
On the other hand, $\mathbf U$ satisfies the identities
$$
x_1\cdots x_{i-1}\cdot\overline{\overline{x_i\cdots x_j}}\cdot x_{j+1}\cdots x_k\approx x_1x_2\cdots x_k=\mathbf{u\approx v}.
$$
Thus, the identity
$$
x_1\cdots x_{i-1}\cdot\overline{\overline{x_i\cdots x_j}}\cdot x_{j+1}\cdots x_k\approx\mathbf v
$$
holds in $\mathbf{V\vee U=V\vee W}$ and therefore, in $\mathbf W$. Then the identities
$$
\mathbf u=x_1x_2\cdots x_k\approx x_1\cdots x_{i-1}\cdot\overline{\overline{x_i\cdots x_j}}\cdot x_{j+1}\cdots x_k\approx\mathbf v
$$
hold in $\mathbf W$.

\smallskip

\emph{Subcase}~2.2: $\mathbf v$ is a semigroup word and $\ell(\mathbf v)>k$. Then equating $x_1,x_2,\dots,x_k$ to $x$, we obtain an identity of the form $x^k\approx x^n$ for some $k<n$. We see that the variety $\mathbf U$ is periodic. Therefore, the variety $\mathbf{V\vee U=V\vee W}$ is periodic too, whence the variety $\mathbf W$ also has this property. Thus, all varieties under consideration are periodic and therefore, can be considered as semigroup varieties. Now we can repeat literally arguments from the proof of sufficiency of Theorem~1.1 in the article~\cite{Gusev-Skokov-Vernikov-18} and conclude that both the varieties $\mathbf U$ and $\mathbf W$ satisfy the identity
$$
x_1x_2\cdots x_k\approx x_1\cdots x_{i-1}(x_i\cdots x_j)^mx_{j+1}\cdots x_k
$$
for some natural $m>1$ and $i,j$ with $1\le i\le j\le k$. Now we can complete the proof by the same arguments as in Subcase~2.1 but with using of the right-hand side of the last identity rather than the word
$$
x_1\cdots x_{i-1}\cdot\overline{\overline{x_i\cdots x_j}}\cdot x_{j+1}\cdots x_k.
$$

\smallskip

\emph{Subcase}~2.3: $\ell(\mathbf v)=k$ or, equivalently, the identity $\mathbf{u\approx v}$ is permutational. Thus, this identity has the form $p_k[\sigma]$ for some $\sigma\in S_k$. By the hypothesis, the variety $\mathbf V$ either satisfies all permutational identities of length $k$ or satisfies none of them. In the former case, the identity $\mathbf{u\approx v}$ holds in $\mathbf V$ and we are done. 

It remains to consider the case when any permutational identity of length $k$ fails in $\mathbf V$. Then $\mathbf V$ coincides with one of the varieties $\mathbf X_{m,n}$ or $\mathbf Y_{m,n}$ with $k<m\le n\le\infty$. Therefore, any non-trivial identity of the form
\begin{equation}
\label{linear word equals w}
x_1x_2\cdots x_k\approx\mathbf w
\end{equation}
fails in $\mathbf V$.

Suppose that $\mathbf W$ contains nilpotent semigroups of nilpotency degree $>k$. We are going to check that in this case $\mathbf W$ does not satisfy any non-trivial and non-permutational identity of the form~\eqref{linear word equals w}. We note that every epigroup variety contains a greatest nil-subvariety. Namely, Lemma~\ref{x*=0} implies that $\mathbf X\wedge\var\{\,\overline x\,\approx 0\}$ is a greatest nil-subvariety of an epigroup variety $\mathbf X$. Let $\mathbf K$ be a greatest nil-subvariety of $\mathbf W$. Then the identity $x_1x_2\cdots x_k\approx 0$ fails in $\mathbf K$. Suppose that $\mathbf W$ satisfies a non-trivial identity of the form~\eqref{linear word equals w}. Then this identity holds in $\mathbf K$. Lemma~\ref{split}(i) implies that our identity is permutational. 

The identity $p_k[\sigma]$ holds in $\mathbf U$. Hence it holds in $\mathbf{V\wedge U=V\wedge W}$. Let the sequence of words
$$
x_1x_2\cdots x_k=\mathbf w_0,\mathbf w_1,\dots,\mathbf w_s= x_{1\sigma}x_{2\sigma}\cdots x_{k\sigma}
$$
be a deduction of shortest length of the identity $p_k[\sigma]$ from the identities of the varieties $\mathbf V$ and $\mathbf W$. The identity $\mathbf w_0\approx\mathbf w_1$ fails in $\mathbf V$. Hence it holds in $\mathbf W$. Therefore, this identity is permutational, whence $\mathbf w_1$ is a linear word of length $k$. If $s>1$ then the identity $\mathbf w_1\approx\mathbf w_2$ holds in $\mathbf V$ but this is impossible. Therefore, $s=1$. This means that $\mathbf W$ satisfies the identity $ x_1x_2\cdots x_k\approx\mathbf v$.

It remains to consider the case when all nilpotent semigroups in $\mathbf W$ have nilpotency degree $\le k$. Recall that $\mathbf V$ is a nil-variety. Hence the variety $\mathbf{V\wedge W=V\wedge U}$ satisfies the identity $x_1x_2\cdots x_k\approx 0$ and therefore, the identity $x_1x_2\cdots x_k\approx x_1x_2\cdots x_ky$ for any letter $y$. Let
$$
x_1x_2\cdots x_k=\mathbf p_0,\mathbf p_1,\dots,\mathbf p_t= x_1x_2\cdots x_ky
$$
be a deduction of the last identity from the identities of the varieties $\mathbf V$ and $\mathbf U$. There is an index $i$ such that the identity $\mathbf p_{i-1}\approx\mathbf p_i$ is non-permutational. Let $i$ be the least index with this a property. Clearly, $i>0$ and $\mathbf p_{i-1}$ is a linear word with $\con(\mathbf p_{i-1})=\{x_1,x_2,\dots,x_k\}$. The identity $\mathbf p_{i-1}\approx\mathbf p_i$ holds in either $\mathbf V$ or $\mathbf U$. Since any non-permutational identity of the form~\eqref{linear word equals w} fails in $\mathbf V$, some identity of such a form holds in $\mathbf U$. Note that $\con(\mathbf w)=\{x_1,x_2,\dots,x_k\}$ because $\mathbf{U\supseteq SL}$. In particular, we have that the word $\mathbf w$ is non-linear. The identity~\eqref{linear word equals w} is equivalent to the identity $x_{1\sigma}x_{2\sigma}\cdots x_{k\sigma}\approx\sigma[\mathbf w]$ for any $\sigma\in S_k$. By Subcases~2.1 and~2.2, both the last identity and~\eqref{linear word equals w} hold in $\mathbf W$. The variety $\mathbf U$ satisfies the identities
$$
\mathbf w\approx x_1x_2\cdots x_k\approx x_{1\sigma}x_{2\sigma}\cdots x_{k\sigma}\approx\sigma[\mathbf w].
$$
As we noted above, the word $\mathbf w$ is non-linear. Therefore, the identities $\mathbf w\approx 0\approx\sigma[\mathbf w]$ hold in $\mathbf V$ by~\eqref{xxy=xyx=yxx=0} whenever $\mathbf w$ is a semigroup word or by Lemma~\ref{x*=0} otherwise. Hence the identity $\mathbf w\approx\sigma[\mathbf w]$ holds in the variety $\mathbf{V\vee U=V\vee W}$ and therefore, in $\mathbf W$. Now we see that $\mathbf W$ satisfies the identities
$$
x_1x_2\cdots x_k\approx\mathbf w\approx\sigma[\mathbf w]\approx x_{1\sigma}x_{2\sigma}\cdots x_{k\sigma}.
$$
Thus, the identity $p_k[\sigma]$ holds in $\mathbf W$.

Theorem~\ref{main epigroup} is proved.\qed

\smallskip

\emph{Proof of Theorem}~\ref{main semigroup}. \emph{Sufficiency}. The scheme of our considerations here is the same as in the proof of sufficiency of Theorem~\ref{main epigroup}. Suppose that a semigroup variety $\mathbf V$ has the form indicated in the formulation of Theorem~\ref{main semigroup}. We need to verify that $\mathbf V$ is a cancellable element of $\mathbb{SEM}$. The case when $\mathbf{V=SEM}$ is evident. Let now $\mathbf{V\ne SEM}$ and $\mathbf U$ and $\mathbf W$ be semigroup varieties such that $\mathbf{V\vee U=V\vee W}$ and $\mathbf{V\wedge U=V\wedge W}$. We need to check that $\mathbf{U=W}$. Corollary~\ref{join with SL} allows us to suppose that $\mathbf V$ is one of the varieties $\mathbf T$, $\mathbf X_{m,n}$ or $\mathbf Y_{m,n}$ with $2\le m\le n\le\infty$. In particular, the variety $\mathbf V$ is periodic. The case when $\mathbf{V=T}$ is evident. By symmetry, it suffices to show that a non-trivial identity $\mathbf{u\approx v}$ holds in $\mathbf W$ whenever it holds in $\mathbf U$. As in the proof of sufficiency of Theorem~\ref{main epigroup}, Corollary~\ref{join with SL} and Lemma~\ref{identities in SL} allow us to assume that $\con(\mathbf u)=\con(\mathbf v)$.

If $|\con(\mathbf u)|=1$ then it suffices to refer to arguments given in Subcase~1.1 from the proof of sufficiency of Theorem~\ref{main epigroup}. Suppose now that $|\con(\mathbf u)|=k>1$. As in Case 2 from the proof of sufficiency of Theorem~\ref{main epigroup}, we can assume that $\mathbf u=x_1x_2\cdots x_k$. If $\ell(\mathbf v)>k$ then we can complete the proof by the same arguments as in Subcase~2.2 of the proof of sufficiency of Theorem~\ref{main epigroup}.

It remains to consider the case when $\ell(\mathbf v)=k$. A semigroup variety is called \emph{overcommutative} if it contains the variety of all commutative semigroups. It is well known that any semigroup variety is either periodic or overcommutative. As in the proof of sufficiency of Theorem~\ref{main epigroup}, the varieties $\mathbf U$ and $\mathbf W$ are either both periodic or both non-periodic, i.e., overcommutative. Suppose at first that $\mathbf U$ and $\mathbf W$ are periodic. Hence the variety $\mathbf W$ contains the greatest nil-subvariety. In this case, we can complete the proof by the same arguments as in the third and the fourth paragraphs of Subcase~2.3 of the proof of sufficiency of Theorem~\ref{main epigroup}. 

Finally, suppose that $\mathbf U$ and $\mathbf W$ are overcommutative. It is well known and easy to check that if an overcommutative variety satisfies some identity then each letter occurs in both sides of this identity the same number of times. Therefore, if $\mathbf W$ satisfies an identity of the form~\eqref{linear word equals w} then this identity is permutational. Further, by the hypothesis, the variety $\mathbf V$ either satisfies all permutational identities of length $k$ or satisfies none of them. As in the proof of sufficiency of Theorem~\ref{main epigroup}, in the former case we are done. It remains to consider the case when any permutational identity of length $k$ fails in $\mathbf V$. Then $\mathbf V$ coincides with one of the varieties $\mathbf X_{m,n}$ or $\mathbf Y_{m,n}$ with $k<m\le n\le\infty$. Therefore, any non-trivial identity of the form~\eqref{linear word equals w} fails in $\mathbf V$. Now we can complete the proof by repeating literally the fourth paragraph of Subcase~2.3 of the proof of sufficiency of Theorem~\ref{main epigroup}.

Theorem~\ref{main semigroup} is proved.\qed

\subsubsection*{Acknowledgements}
In September--October 2013, the third author visited the University of Novi Sad by the invitation of Professor Igor Dolinka. During this visit, Professors \v{S}e\v{s}elja and Tepav\v{c}evi\'{c} asked him a question: what is known about cancellable elements of the lattice of semigroup varieties? At that time, nothing was known about this. After that, the third author and his colleagues were studying this question and its epigroup modification. The first results of this study were presented in~\cite{Gusev-Skokov-Vernikov-18,Skokov-18,Skokov-Vernikov-19}, while here we provide the exhaustive answer to the question. We thank Professor Igor Dolinka for the excellent organization of the visit and Professors \v{S}e\v{s}elja and Tepav\v{c}evi\'{c} for their interest in our work and such a fruitful discussion.

\small

\end{document}